\newtheorem{thm}{Theorem}[section]
\newtheorem{prop}[thm]{Proposition}
\newtheorem{lem}[thm]{Lemma}
\theoremstyle{definition}
\newtheorem{defn}[thm]{Definition}
\newtheorem{example}[thm]{Example}
\theoremstyle{remark}
\newtheorem{rem}[thm]{Remark}
\newtheorem{remark}[thm]{Remark}
\numberwithin{equation}{section}
\newcommand{\eps}{\ep}
\newcommand{\ddbar}{\partial\bar\partial}
\newcommand{\dom}[1]{\mathrm{Dom}(#1)}
\DeclareMathOperator{\re}{\mathrm{Re}}
\renewcommand{\L}{\mathcal{L}}
\DeclareMathOperator{\dist}{dist}
\newcommand{\X}{\mathcal X}
\DeclareMathOperator{\rea}{Reach}
\newcommand{\Om}{\Omega}
\newcommand{\LL}{\bar L}
\newcommand{\R}{\mathbb R}
\newcommand{\Z}{\mathbb Z}
\newcommand{\N}{\mathbb N}
\newcommand{\C}{\mathbb C}
\DeclareMathOperator{\Imm}{Im}
\DeclareMathOperator{\Rre}{Re}
\DeclareMathOperator{\Dom}{Dom}
\DeclareMathOperator{\Ran}{Range}
\newcommand{\bd}{\textrm{b}}
\newcommand{\p}{\partial}
\newcommand{\z}{\bar z}
\newcommand{\dbar}{\bar\partial}
\newcommand{\dbars}{\bar\partial^*}
\newcommand{\dbarst}{\bar\partial^*_t}
\newcommand{\dbarb}{\bar\partial_b}
\newcommand{\vp}{\varphi}
\newcommand{\atopp}[2]{\genfrac{}{}{0pt}{2}{#1}{#2}}
\newcommand{\nn}{\nonumber}
\newcommand{\ep}{\epsilon}
\newcommand{\I}{\mathcal{I}}
\DeclareMathOperator{\Tr}{Tr}
\newcommand{\la}{\langle}
\newcommand{\ra}{\rangle}
\newcommand{\bom}{\bar\omega}
\begin{document}

\title{Closed range of $\dbar$ in $L^2$-Sobolev spaces on unbounded domains in $\C^n$}

\author{Phillip S. Harrington and Andrew Raich}

\thanks{The second author was partially supported by NSF grant DMS-1405100.}

\address{Department of Mathematical Sciences, SCEN 309, 1 University of Arkansas, Fayetteville, AR 72701}
\email{psharrin@uark.edu \\ araich@uark.edu}

\keywords{unbounded domains, weighted Sobolev spaces, defining functions, closed range, $\dbar$-Neumann, weak $Z(q)$, $q$-pseudoconvexity}

\subjclass[2010]{Primary 32W05, Secondary 32F17, 35N15}

\begin{abstract}
Let $\Om\subset\C^n$ be a domain and $1 \leq q \leq n-1$ fixed. Our purpose in this article is to
establish a general sufficient condition for the closed range of the Cauchy-Riemann operator $\bar\partial$ in appropriately weighted $L^2$-Sobolev spaces
on $(0,q)$-forms.
The domains we consider may be neither bounded nor pseudoconvex, and our condition is a generalization of the classical $Z(q)$ condition that we call weak $Z(q)$.
We provide examples that explain the necessity of working in weighted spaces both for closed range in $L^2$ and, even more critically, in $L^2$-Sobolev spaces.
\end{abstract}

\maketitle

\section{Introduction}
\label{sec:introduction}

This paper is a continuation of \cite{HaRa17}. We
suppose that $\Omega\subset\mathbb{C}^n$ is a smooth domain, and we require neither boundedness nor pseudoconvexity of $\Om$. Our objective to find the weakest possible sufficient condition that ensures
the Cauchy-Riemann operator $\dbar$ has closed range on $(0,q)$-forms in $L^2$-Sobolev spaces, for a \emph{fixed} $q$, $1 \leq q \leq n-1$.  In \cite{HaRa17}, we proved closed range only in $L^2$.
When $\Omega$ is bounded and pseudoconvex,
our result reproduces the classical cases (e.g., Kohn \cite{Koh73}).

We continue to explore the weak $Z(q)$ hypothesis that that we introduced
in \cite{HaRa15}. Weak $Z(q)$ (defined below) is a curvature condition on the Levi form
that suffices to prove that the range of $\dbar$ is closed in $L^2_{0,q}$ or $L^2_{0,q+1}$ on bounded domains in Stein manifolds as well as
unbounded domains with uniform $C^3$ regularity. The weak $Z(q)$ condition is a more general version than the authors' condition in
\cite{HaRa11}, and is closely related to, but still more general than, related conditions in \cite{Ho91}, \cite{AhBaZa06}, and \cite{Zam08} which have been investigated for closed range
of $\dbar$ (or $\dbarb$) in a variety of settings. Its name derives from the fact that it generalizes the
classic $Z(q)$ condition (see \cite{Hor65}, \cite{FoKo72}, \cite{AnGr62}, or \cite{ChSh01}).

Unbounded domains in $\C^n$ may exhibit very different behavior than bounded ones. For example, $\Om$ satisfies
the classic $Z(q)$ condition when the Levi form has either at least $q+1$ negative or at least $n-q$ positive eigenvalues at every boundary point.
However, on any bounded domain, there must be at least one strictly (pseudo)convex boundary point, which forces (by continuity of the eigenvalues of the Levi form) a bounded $Z(q)$ domain in $\mathbb{C}^n$ to
have at least $n-q$ positive eigenvalues at every boundary point.
Hence, a large class of interesting local examples (those with at least $q+1$ negative eigenvalues) cannot be realized globally as bounded domains in $\mathbb{C}^n$ (or indeed any Stein manifold). For an in depth look at
the consequences of $Z(q)$ for unbounded domains, please see \cite{HaRa17z}.

In order to prove closed range of $\dbar$ in $L^2$ on any reasonable class of unbounded domains, it is necessary to work in weighted $L^2$ spaces. Unlike in the bounded case, these weighted $L^2$ spaces are not equivalent to the unweighted
spaces. A simple counterexample demonstrates the necessity of using a weight function.
Suppose that $\Om$ contains balls of arbitrarily large radii. We want to see that the closed range estimate
\begin{equation}\label{eqn:closed range estimate unweighted}
  \|u\|_{L^2(\Om)}\leq C(\|\dbar u\|_{L^2(\Om)}+\|\dbars u\|_{L^2(\Om)})
\end{equation}
cannot hold for any $C>0$. Also
$\dbars$ is the $L^2$ adjoint of $\dbar$ (see Section \ref{sec:weakly_z(q)_domains} for details on the notation).
The Siegel upper space $\{(z,w)\in\C^{n+1}: \Im w > |z|^2\}$ satisfies the large ball condition and is the unbounded domain \emph{par excellence} -- its boundary is the Heisenberg group and it is also biholomorphic to the unit ball.
By the large ball condition, there exists $z_R\in\Omega$ such that $B(z_R,R)\subset\Omega$ for every $R>0$.
Let $u_1\in C^\infty_{0,(0,q)}(B(0,1))$ be nontrivial, and define $u_R(z)=\frac{1}{R^n}u_1\left(\frac{z-z_R}{R}\right)$.  Then $u_R\in C^\infty_{0,(0,q)}(B(z_R,R))\subset C^\infty_{0,(0,q)}(\Omega)$.  If \eqref{eqn:closed range estimate unweighted}
were to hold, then
\[
  \|u_1\|_{L^2(\Om)}=\|u_R\|_{L^2(\Om)}\leq C(\|\dbar u_R\|_{L^2(\Om)}+\|\dbars u_R\|_{L^2(\Om)})=R^{-1}C(\|\dbar u_1\|_{L^2(\Om)}+\|\dbars u_1\|_{L^2(\Om)}).
\]
Since this inequality must hold for every $R>0$, we have a contradiction.  Thus, closed range estimates in $L^2$ are impossible on many unbounded domains, so we must consider weighted $L^2$ spaces.
In \cite{HaRa17}, we do briefly touch upon the $L^2$-theory for $\dbar$ in unweighted $L^2$ spaces for domains that satisfy weak $Z(q)$. Gallagher and McNeal establish
sufficient conditions for the closed range of $\dbar$ in $L^2$ unbounded, pseudoconvex domains \cite{HeMc16}.

Even if we wanted to concentrate on domains for which we can establish the unweighted $L^2$ theory for $\dbar$, there is no hope for any usable result in Sobolev spaces. The reason is that
the Sobolev space theory is effectively useless on any interesting unbounded domain.
For example,
suppose that $\Omega$ contains infinitely many disjoint balls $B_k$ of fixed radius $r$ (as is the case in the model domain defined by $\rho(z)=\sum_{j=1}^n (\re z_j)^2-1$ for which $\dbar$ has closed range in unweighted $L^2$
\cite{HaRa17}).
If we take any function $f\in C^\infty_0(B(0,r))$ and define $f_k(z)=f(z-c_k)$, where $c_k$ is the center of $B_k$, then we have a sequence $\{f_k\}$ that is uniformly bounded in $L^2$ with no convergent subsequence.  Hence,
$H^1(\Om)$ is not compact in $L^2(\Om)$, and the Rellich Lemma fails, making any theory of Sobolev Spaces extremely problematic.

When working on weighted $L^2$ spaces for unbounded domains, adjoints of differential operators can introduce low order terms with unbounded coefficients.  For example, if $D$ is a differential operator and $e^{-\varphi}$ is our weight, we have
\[
  D^*_\varphi=e^{\varphi}D^* e^{-\varphi}=D^*+(\bar D\varphi).
\]
Roughly speaking, our Sobolev spaces must be defined in such a way that multiplying by the unbounded function $\bar D\varphi$ is no worse that differentiating in $D^*$.  This means that great care is required when defining Sobolev spaces.  In \cite{HaRa14}, the authors developed the theory of weighted Sobolev spaces on unbounded domains building on ideas in \cite{GaHa10} and \cite{Gan10}.  Boundary smoothness also requires greater care, since derivatives of defining functions may still be unbounded even when the domain itself is smooth.  In \cite{HaRa13}, the authors carefully examined defining functions for unbounded domains and concluded that from this perspective, the signed distance function works at least as well as any other defining function.
To avoid undue technicalities, we will primarliy use the weight $\varphi=t|z|^2$. Note that $t|z|^2$ will always satisfy $(HII)-(HV)$ in \cite{HaRa14}.

With the tools of \cite{HaRa15}, \cite{HaRa13}, \cite{HaRa14}, and the $L^2$ theory established in \cite{HaRa17}, we are now able to prove closed range of the Cauchy-Riemann operator
on appropriately defined Sobolev spaces
for a large class of unbounded domains.  We review our key definitions in Section \ref{sec:weakly_z(q)_domains}.
Section \ref{sec:basic estimate, proof of L^2 case} recaps the proof of the basic estimate from \cite{HaRa17}.
We conclude the paper with the proof of the main theorem on Sobolev space in Section \ref{sec:weighted_sobolev_spaces}.

\section{Weakly $Z(q)$ domains.}
\label{sec:weakly_z(q)_domains}

\subsection{Notation}
We follow the setup of \cite{HaRa17}.
Let $\Omega\subset \C^n$ be a domain with $C^m$ boundary $\bd\Omega$.
\begin{defn}\label{defn:uniform_defining_function}We say that a defining function $\rho$ for $\Omega$ is \emph{uniformly $C^m$} if there exists an open neighborhood $U$ of $\bd\Omega$ such that $\dist(\bd\Omega,\bd U)>0$, $\|\rho\|_{C^m(U)}<\infty$, and $\inf_U |\nabla\rho|>0$.
\end{defn}
There is no difference between uniform $C^m$ and $C^m$ on domains with compact boundary. On unbounded domains, however,
we provided counterexamples, a large class of examples, and a complete characterization in terms of the signed distance function in \cite{HaRa13}.

We identify real $(1,1)$-forms with a hermitian matrix as follows:
\[
c=\sum_{j,k=1}^{n} i c_{j\bar k}\, dz_j\wedge d\z_k
\]

For a function $\alpha$, we denote $\alpha_k = \frac{\p\alpha}{\p z_k}$ and $\alpha_{\bar j} = \frac{\p\alpha}{\p\z_j}$.

Let $\rho:\C^n\to\R$ be a uniformly $C^m$-defining function for $\Omega$.
The $L^2$-inner product on $L^2(\Omega,e^{-t|z|^2})$ is denoted by
\[
(f,g)_t 
= \int_\Omega f \bar g\, e^{-t|z|^2} dV.
\]
where 
$dV$ is Lebesgue measure on $\C^n$. Let $d\sigma$ denote
the induced surface area measure on $\bd\Omega$ and set $\| f \|_t^2 = \int_\Omega |f|^2 e^{-t|z|^2}\, dV$.

Let $\I_q = \{ (i_1,\dots,i_q)\in \N^n : 1 \leq i_1 < \cdots < i_q\leq n \}$. For $I\in\I_{q-1}$, $J\in\I_q$, and $1\leq j \leq n$, let
$\ep^{jI}_{J} = (-1)^{|\sigma|}$ if $\{j\} \cup I = J$ as sets and $|\sigma|$ is the length of the permutation that takes $\{j\}\cup I$ to $J$. Set
$\ep^{jI}_J=0$ otherwise. We use the standard notation that if $u = \sum_{J\in\I_q} u_J\, d\z_J$, then
\[
u_{jI} = \sum_{J\in\I_q} \ep^{jI}_J u_J.
\]

Let $L^{t}_j = \frac{\p}{\p z_j} - t\z_j = e^{t|z|^2}\frac{\p}{\p z_j} e^{-t|z|^2}$ and let $\dbars_t:L^2_{0,q+1}(\Omega,e^{-t|z|^2})\rightarrow L^2_{0,q}(\Omega,e^{-t|z|^2})$ be the $L^2$-adjoint of
$\dbar:L^2_{0,q}(\Omega, e^{-t|z|^2}) \to L^2_{0,q+1}(\Omega, e^{-t|z|^2})$. This means that if
$f = \sum_{J\in\I_q} f_J\, d\z_J$ and $g = \sum_{K\in\I_{q+1}}g_K\, d\z_K\in\Dom(\dbars_t)$, then
\[
\dbar f = \sum_{\atopp{J\in\I_q}{K\in\I_{q+1}}}\sum_{k=1}^n \ep^{kJ}_K \frac{\p f_J}{\p\z_k}\, d\z_K
\qquad\text{and}\qquad
\dbars_t g = -\sum_{J\in\I_{q}} \sum_{j=1}^n L^t_j g_{jJ}\, d\z_J.
\]

The induced CR-structure on $\bd\Omega$  at $z\in\bd\Omega$ is
\[
T^{1,0}_z(\bd\Omega)  = \{ L\in T^{1,0}(\C) : \p\rho(L)=0 \}.
\]
Let $T^{1,0}(\bd\Omega)$ be the space of $C^{m-1}$ sections of $T^{1,0}_z(\bd\Omega)$ and $T^{0,1}(\bd\Omega) = \overline{T^{1,0}(\bd\Omega)}$.
We denote the exterior algebra generated by these spaces by $T^{p,q}(\bd\Omega)$.

Let $\rho$ be a defining function so that $|d\rho|=1$ on $\bd\Omega$. We define the \emph{normalized Levi form} $\L$ as the real element of $\Lambda^{1,1}(\bd\Omega)$ given by
\[
\L(-i L\wedge \LL) = i\p\dbar\rho(-iL\wedge\LL)
\]
for any $L\in T^{1,0}(\bd\Omega)$.

\begin{defn}\label{defn:tubular nbhd}Given a set $M\subset\C^n$, a \emph{tubular neighborhood} of $M$ is an open set $U_r$ of the form
$U_{r} = \{p\in\C^n : \dist(p,M)<r\}$ where $\dist(\cdot,\cdot)$ is the Euclidean distance function. We call $r$ the \emph{radius} of $U_r$. If there exists $r>0$ so that every point in $U_r$ has a unique closest point in
$M$, we say that $M$ has \emph{positive reach}.
\end{defn}

\subsection{Weak $Z(q)$ domains and closed range for $\dbar$}

The following definition was introduced in \cite{HaRa15}, building on ideas in \cite{HaRa11}.
\begin{defn}\label{defn:weak Z(q)}
Let $\Omega\subset \C^n$ be a domain with a uniformly $C^m$ defining function $\rho$, $m\geq 2$. We say $\bd\Omega$ (or $\Omega$) satisfies
\emph{Z(q) weakly}
if there exists a hermitian matrix $\Upsilon=(\Upsilon^{\bar k j})$ of functions on $b\Omega$ that are uniformly bounded in $C^{m-1}$ such that $\sum_{j=1}^{n}\Upsilon^{\bar k j}\rho_j=0$ on $b\Omega$ and:
\begin{enumerate}\renewcommand{\labelenumi}{(\roman{enumi})}
 \item All eigenvalues of $\Upsilon$ lie in the interval $[0,1]$.

 \item $\mu_1+\cdots+\mu_q-\sum_{j,k=1}^n\Upsilon^{\bar k j}\rho_{j\bar k}\geq 0$ where  $\mu_1,\ldots,\mu_{n-1}$ are the eigenvalues of the Levi form $\L$ in increasing order.

 \item $ \inf_{z\in\bd\Omega} \{ |q-\Tr(\Upsilon)|\} >0$.

\end{enumerate}
\end{defn}
H\"ormander first used an identity, now called the basic identity, to prove a basic estimate for $\dbar$ on pseudoconvex domains \cite{Hor65}. With our current hypotheses, we established the most general basic identity that we could formulate and it
led to the definition of weak $Z(q)$ in \cite{HaRa15,HaRa17}. Given suitable hypotheses, including $f \in\Dom(\dbar)\cap\Dom(\dbars)$, $\bd\Om$ is at least $C^3$, and $i\p\dbar \vp = ti \p\dbar |z|^2$ for some $t\in\R$, the basic identity
we established in \cite[Proposition 3.4]{HaRa17} is
\begin{align}
&\| \dbar f\|_\varphi^2 + \| \dbars_\varphi f \|_\varphi^2
=  \sum_{J\in \I_q}\sum_{j,k=1}^{n}\left((I_{jk}-\Upsilon^{\bar kj})\frac{\partial f_J}{\partial\bar z_k},\frac{\partial f_J}{\partial\bar z_j}\right)_\varphi
+\sum_{J\in \I_q}\sum_{j,k=1}^{n}\left(\Upsilon^{\bar kj}L^\varphi_j  f_J,L^\varphi_k  f_J\right)_\varphi \label{eqn:BI} \\
&+\sum_{I\in\I_{q-1}} \sum_{j,k=1}^n \int_{\bd\Omega}\big\la \rho_{j\bar k} f_{jI}, f_{kI} \big\ra e^{-\varphi}d\sigma
- \sum_{J\in \I_q}\sum_{j,k=1}^n   \int_{b\Omega} \left\la \Upsilon^{\bar kj}\rho_{j\bar k}  f_J , f_J\right\ra e^{-\varphi} d\sigma\nn\\
&+ 2\Rre\Bigg\{  \sum_{J\in \I_q}\sum_{j,k,\ell=1}^{n}\left(\frac{\partial\Upsilon^{\bar kj}}{\p\z_k}\Upsilon^{\bar j\ell}
  L^\varphi_\ell  f_J,f_J\right)_\varphi
  -\sum_{J\in \I_q}\sum_{j,k,\ell=1}^{n} 				
   \left(\frac{\partial\Upsilon^{\bar kj}}{\partial z_j}(I_{k \ell}-\Upsilon^{\bar\ell k})\frac{\partial f_J}{\partial\bar z_\ell},f_J\right)_\varphi \Bigg\}\nn \\
&+ \sum_{J\in\I_q} t \big((q-\Tr(\Upsilon)) f_J, f_J\big)_\varphi + O (\| f \|_\varphi^2),\nn
\end{align}
where $O(\|f\|_{\varphi}^2) \leq C(\|\Upsilon\|_{C^1}+\|\Upsilon\|_{C^2}^2) \| f \|_\varphi^2$ and $I$ is the identity matrix.

The matrix $\Upsilon$ is chosen so that the boundary integral terms in the second line of \eqref{eqn:BI} are nonnegative (and hence can be discarded) while keeping $\inf_{z\in\Om}|q - \Tr\Upsilon| >0$. We also extended $\Upsilon$ into the interior of $\Om$
(Lemma \ref{lem:extension of Upsilon} below). The fact that the eigenvalues of $\Upsilon$ are nonnegative and
bounded by 1 means that the terms in the first line of \eqref{eqn:BI} are nonnegative. Finally, Property (iii) means that $((q-\Tr(\Upsilon)) f, f)_\varphi \sim \|f\|_\vp$, allowing us to prove the basic estimate, Proposition \ref{prop:basic estimate} below.
The constant $t$ is chosen large enough so that the junk terms in the third line of \eqref{eqn:BI} are controlled, as is the $O (\| f \|_\varphi^2)$ term from the fourth line of \eqref{eqn:BI}.

For our results on weighted Sobolev spaces, an additional hypothesis is needed.  In \cite{HaRa14}, we introduced six hypotheses $(HI)-(HVI)$ that were important for developing the
elliptic theory with weighted Sobolev spaces on unbounded domains.  The first hypothesis was equivalent to Definition \ref{defn:uniform_defining_function},
so $(HI)$ will be satisfied whenever we have a uniformly $C^m$ defining function, $m\geq 3$.
Hypotheses $(HII)-(HV)$ are trivial for the weight function $\varphi=t|z|^2$, so we will not need to address them directly in this paper.  Thus, we need only concern ourselves with $(HVI)$.  In the notation of the present paper, we have:
\begin{defn}
\label{defn:asymptotically_non_radial}
  Let $\Omega\subset\mathbb{R}^{d}$ be an unbounded domain.  We say $\Omega$ is \emph{asymptotically non-radial} if
  \[
    \inf_{r>0}\sup_{|x|>r,x\in\bd\Omega}\frac{x\cdot\nabla\rho}{|x||\nabla\rho|}<1
  \]
  for any $C^1$ defining function $\rho$ for $\Omega$.
\end{defn}
In \cite{HaRa14}, this condition is needed in order to show that the restrictions of our weighted Sobolev spaces to $\bd\Omega$ will still satisfy Rellich's Lemma.  A key step in the proof relies on the hypothesis that tangential derivatives of our weight function grow uniformly without bound.  For the special weight function $|x|^2$, this is equivalent to Definition \ref{defn:asymptotically_non_radial}.

Geometrically, we are requiring that the normal vector is bounded away from the radial direction for sufficiently large $|x|$.  To see that this is not a restrictive condition on unbounded domains, observe that $|x|$ can only increase very slowly in the boundary when the normal vector is almost radial.  More precisely, for $r_0>0$ and $0<\theta_1-\theta_0<2\pi$ consider the unbounded open set in polar coordinates $U=\{(r,\theta):r>r_0\text{ and }\theta_0<\theta<\theta_1\}$ and a domain $\Omega\subset\mathbb{R}^2$ defined in polar coordinates on $U$ by $\Omega\cap U=\left\{(r,\theta):r_0<r<e^{f(\theta)},\theta_0<\theta<\theta_1\right\}$ for some $f\in C^1(\theta_0,\theta_1)$.  Since $\Omega$ is defined on $U$ by $\rho(r,\theta)=r-e^{f(\theta)}$, we have $\frac{x\cdot\nabla\rho}{|x||\nabla\rho|}=(1+(f'(\theta))^2)^{-1/2}$.  Hence, $\Omega$ is unbounded and asymptotically nonradial near $\theta_0$ on $U$ if and only if
\[
 \lim_{\theta\rightarrow\theta_0^+}f(\theta)=\infty\text{ and }\limsup_{\theta\rightarrow \theta_0^+}f'(\theta)<0.
\]
Any rational function, for example, would satisfy this property.  Constructing a counterexample that would also define a uniformly $C^2$ domain would require great care.  Although more complicated behavior is possible in higher dimensions, it appears that asymptotic nonradiality is a mild restriction to make on a domain.

Before we state our main result,
we prove a percolation result that greatly expands the scope of our main theorem.
\begin{prop}\label{prop:percolation}
Let $\Omega\subset \C^n$ be a domain with connected boundary that admits a uniformly $C^2$ defining function and
satisfies weak $Z(q)$ for some $1\leq q \leq n-1$. If $q - \Tr(\Upsilon)>0$, then $\Om$ satisfies weak $Z(q')$ for $q \leq q' \leq n-1$. If $q - \Tr(\Upsilon) < 0$, then $\Om$ satisfies weak $Z(q')$ for $1 \leq q' \leq q$.
\end{prop}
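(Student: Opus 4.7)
The plan is to show that the \emph{same} matrix $\Upsilon$ witnessing weak $Z(q)$ also witnesses weak $Z(q')$ for every admissible $q'$, with no modification at all. Conditions (i) and the side constraint $\sum_{j}\Upsilon^{\bar k j}\rho_{j}=0$ are independent of the integer, and the regularity of $\Upsilon$ is already uniformly $C^{m-1}$ by hypothesis. Because $b\Omega$ is connected and $\Upsilon$ is continuous, the real-valued function $q-\Tr(\Upsilon)$ is continuous and nonvanishing by (iii), hence has constant sign on $b\Omega$, so the two cases of the statement are exhaustive and mutually exclusive.

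The argument reduces to a pointwise spectral inequality. Fix $z\in b\Omega$, let $\mu_1\le\cdots\le\mu_{n-1}$ be the Levi eigenvalues, and set $s=\Tr(\Upsilon)(z)$. Working in an orthonormal Levi eigenbasis of $T^{1,0}(b\Omega)$, the quantity appearing in (ii) takes the form $\sum_{j,k}\Upsilon^{\bar k j}\rho_{j\bar k}=\sum_i\Upsilon_{ii}\mu_i$ with diagonal entries $\Upsilon_{ii}\in[0,1]$ and $\sum_i\Upsilon_{ii}=s$. Minimizing this expression over all weights in $[0,1]$ summing to $s$ (the minimum concentrates mass on the smallest $\mu_i$) yields
\[
\sum_{j,k}\Upsilon^{\bar k j}\rho_{j\bar k}\;\ge\;\mu_1+\cdots+\mu_{\lfloor s\rfloor}+(s-\lfloor s\rfloor)\mu_{\lfloor s\rfloor+1}.
\]
Combining this with hypothesis (ii) and cancelling shared terms gives, when $s<q$, the inequality
\[
(1-(s-\lfloor s\rfloor))\mu_{\lfloor s\rfloor+1}+\mu_{\lfloor s\rfloor+2}+\cdots+\mu_q\;\ge\;0,
\]
a nonnegative combination with positive total weight $q-s>0$ whose largest entry is $\mu_q$, which forces $\mu_q\ge 0$. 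In the symmetric case $s>q$ one analogously obtains
\[
\mu_{q+1}+\cdots+\mu_{\lfloor s\rfloor}+(s-\lfloor s\rfloor)\mu_{\lfloor s\rfloor+1}\;\le\;0,
\]
a nonnegative combination bounded above by zero with smallest entry $\mu_{q+1}$, forcing $\mu_{q+1}\le0$. By monotonicity of $(\mu_i)$, in the first case $\mu_i\ge0$ for every $i\ge q$, and in the second $\mu_i\le0$ for every $i\le q+1$.

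From these pointwise facts the proposition follows directly. In Case 1 ($q-\Tr(\Upsilon)>0$ uniformly), for any $q\le q'\le n-1$,
\[
\sum_{i=1}^{q'}\mu_i\;=\;\sum_{i=1}^{q}\mu_i+\sum_{i=q+1}^{q'}\mu_i\;\ge\;\sum_{i=1}^{q}\mu_i\;\ge\;\sum_{j,k}\Upsilon^{\bar k j}\rho_{j\bar k},
\]
since the inserted terms are nonnegative, which is (ii) for $q'$; condition (iii) persists because $q'-\Tr(\Upsilon)\ge q-\Tr(\Upsilon)$ is bounded below by the same positive infimum. Case 2 ($q-\Tr(\Upsilon)<0$) is handled symmetrically: for $1\le q'\le q$ the terms $\mu_{q'+1},\ldots,\mu_q$ are nonpositive, so removing them only enlarges the left-hand side of (ii), and $\Tr(\Upsilon)-q'\ge\Tr(\Upsilon)-q$ preserves (iii) uniformly. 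The main obstacle is the spectral minimization itself, particularly the bookkeeping when $s$ is noninteger, but this is a standard Schur-type computation in the Levi eigenbasis.
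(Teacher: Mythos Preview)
Your proof is correct and follows essentially the same approach as the paper: keep the same $\Upsilon$, diagonalize the Levi form at each boundary point, and verify conditions (ii) and (iii) for the new $q'$. The paper invokes \cite[Lemma~2.8]{HaRa15} for the key spectral fact (that $q-\Tr(\Upsilon)>0$ forces at least $n-q$ nonnegative Levi eigenvalues, and $q-\Tr(\Upsilon)<0$ forces at least $q+1$ nonpositive ones), whereas you supply a self-contained proof of this lemma via the Schur-type minimization $\sum_i\Upsilon_{ii}\mu_i\ge\mu_1+\cdots+\mu_{\lfloor s\rfloor}+(s-\lfloor s\rfloor)\mu_{\lfloor s\rfloor+1}$; after that the two arguments are identical.
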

\begin{proof}The proof of the proposition follows easily from the fact that we may leave $\Upsilon$ unchanged and \cite[Lemma 2.8]{HaRa15}. This lemma says that weak $Z(q)$ with $q - \Tr\Upsilon>0$ implies that
the Levi form of $\Om$ has at least $(n-q)$ nonnegative eigenvalues and weak $Z(q)$ with $q - \Tr\Upsilon <0$ implies that the Levi form has at least $(q+1)$ nonpositive eigenvalues. The proof becomes transparent
by diagonalizing the Levi form at a point (as we do immediately prior to \cite[Lemma 2.8]{HaRa15}) and inspecting the inequalities from the definition of weak $Z(q)$ in these coordinates.
\end{proof}

The type of estimates that the weighted operators will satisfy is the following: for $t$ sufficiently large, the operator $T_t$, initially known to be bounded from $L^2_{0,q'}(\Om,e^{-t|z|^2})$  to
$L^2_{0,q''}(\Om,e^{-t|z|^2})$ for some $q',q''$, will be shown to be continuous from
$H^s_{0,q'}(\Om,e^{-t|z|^2},X)$ to $H^s_{0,q''}(\Om,e^{-t|z|^2},X)$ and satisfy the estimate
\begin{equation}\label{eqn:Hs,L2 est}
\|T_t u \|_{t,s,\Om}^2 \leq C_s\|u\|_{t,s,\Om}^2 + C_{t,s} \|u\|_t^2
\end{equation}
where $C_s$ only depends on $s$, $C_{t,s}$ depends on both $t$ and $s$, and neither constant depends on $u$.
\begin{thm}\label{thm:closed range for dbar}Let $\Omega\subset \C^n$ be a domain with connected boundary that is asymptotically nonradial, admits a uniformly $C^m$ defining function, $m\geq 3$, has positive reach, and
satisfies weak $Z(q)$ for some $1\leq q \leq n-1$. Let $0\leq s\leq m-2$.
Then there exists a $T_s>0$ so that if $q-\Tr(\Upsilon)>0$ and $t\geq T_s$ or  $q-\Tr(\Upsilon)<0$ and $t\leq -T_s$, then
\begin{enumerate}\renewcommand{\labelenumi}{(\roman{enumi})}
\item The operator $\dbar: H^s_{0,\tilde q}(\Omega, e^{-t|z|^2},X)\to H^s_{0,\tilde q+1}(\Omega, e^{-t|z|^2},X)$ has closed range
for $\tilde q = q-1$ or $q$;
\item The operator $\dbars_t: H^s_{0,\tilde q+1}(\Omega, e^{-t|z|^2},X)\to H^s_{0,\tilde q}(\Omega, e^{-t|z|^2},X)$ has closed range
for $\tilde q = q-1$ or $q$;
\item The weighted $\dbar$-Neumann Laplacian defined by $\Box_{q,t} = \dbar\dbars_t + \dbars_t\dbar$ has closed range on $H^s_{0,q}(\Omega, e^{-t|z|^2},X)$;
\item $\ker(\Box_{q,t}) = \{0\}$.
\item
The following operators are continuous and satisfy estimates of type \eqref{eqn:Hs,L2 est}:
\begin{enumerate}
\item The weighted $\dbar$-Neumann operator $N_{q,t} : H^s_{0,q}(\Omega, e^{-t|z|^2},X) \to H^s_{0,q}(\Omega, e^{-t|z|^2},X)$;
\item $ \dbars_t N_{q,t}:H^s_{0,q}(\Omega, e^{-t|z|^2},X)\to H^s_{0,q-1}(\Omega, e^{-t|z|^2},X)$
\item $N_{q,t}\dbarst : H^s_{0,q+1}(\Omega, e^{-t|z|^2},X) \to H^s_{0,q}(\Omega, e^{-t|z|^2},X)$
\item $\dbar N_{q,t}:H^s_{0,q}(\Omega, e^{-t|z|^2},X)\to H^s_{0,q+1}(\Omega, e^{-t|z|^2},X)$
\item $N_{q,t}\dbar : H^s_{0,q-1}(\Omega, e^{-t|z|^2},X)\to H^s_{0,q}(\Omega, e^{-t|z|^2},X)$
\item $\dbars_t N_{q,t} \dbar: H^s_{0,q-1}(\Omega, e^{-t|z|^2},X) \to H^s_{0,q-1}(\Omega, e^{-t|z|^2},X)$
\item $\dbar \dbars_t N_{q,t}$ and $\dbars_t\dbar N_{q,t}$  mapping $H^s_{0,q}(\Omega, e^{-t|z|^2},X)$ to itself.
\end{enumerate}

\item If $\tilde{q}=q$ or $q+1$ and $\alpha\in H^s_{0,\tilde{q}}(\Omega, e^{-t|z|^2},X)$
so that  $\dbar \alpha =0$, then there exists $u\in H^s_{0,\tilde{q}-1}(\Omega, e^{-t|z|^2},X)$ so that
\[
\dbar u = \alpha.
\]
\item If $m=\infty$, $\tilde{q}=q$ or $q+1$, and $\alpha\in C^\infty_{0,\tilde q}(\overline\Om)$ satisfies
$\dbar \alpha =0$, then there exists $u\in C^\infty_{0,\tilde{q}-1}(\overline\Omega)$ so that
\[
\dbar u = \alpha.
\]
\end{enumerate}
\end{thm}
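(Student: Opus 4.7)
The plan is to bootstrap from the $L^2$ closed-range theory established in \cite{HaRa17} to the weighted Sobolev scale using tangential differentiation of the basic identity \eqref{eqn:BI}, together with the weighted Sobolev space machinery of \cite{HaRa14} that was designed precisely to cope with the fact that $\dbarst = \dbars + (\p\vp)$ has an unbounded zero-order term when $\vp = t|z|^2$. Since $\Omega$ admits a uniformly $C^m$ defining function, has positive reach, and is asymptotically non-radial, hypotheses $(HI)$--$(HVI)$ of \cite{HaRa14} are in force for $\vp=t|z|^2$, so the Sobolev spaces $H^s_{0,\tilde q}(\Om, e^{-t|z|^2}, X)$ behave as expected: boundary traces satisfy Rellich's Lemma, and tangential vector fields adapted to $\bd\Omega$ form a usable frame in a fixed tubular neighborhood.

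Concretely, I would first recall from \cite{HaRa17} that weak $Z(q)$ together with Property (iii) gives the basic estimate
\[
 \|f\|_t^2 \;\lesssim\; \|\dbar f\|_t^2 + \|\dbarst f\|_t^2
\]
for $f\in\Dom(\dbar)\cap\Dom(\dbarst)$ in $L^2_{0,\tilde q}(\Om,e^{-t|z|^2})$, $\tilde q = q-1,q,q+1$, provided $|t|\geq T_0$ with the correct sign dictated by $\sgn(q-\Tr(\Upsilon))$. This yields (i)--(iv) in the $s=0$ case and the $L^2$ versions of (v) via the standard Hilbert-space argument: $N_{q,t}=(\Box_{q,t})^{-1}$ is bounded, and $\dbar N_{q,t}$, $\dbarst N_{q,t}$, etc., are bounded by the basic estimate. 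Next, to lift this to $H^s$, I would work in a boundary tubular neighborhood guaranteed by positive reach, and apply a tangential differential operator $D^\alpha_{\tau}$ of order $|\alpha|\leq s$ to the basic identity. The resulting commutators $[D^\alpha_\tau, \dbar]$ and $[D^\alpha_\tau, \dbarst]$ involve derivatives of $\rho$, of $\Upsilon$, and of $\vp$; the first two are uniformly bounded in $C^{m-1}$ by the uniform $C^m$ hypothesis and by the definition of weak $Z(q)$, while derivatives of $\vp=t|z|^2$ are built into the definition of the weighted Sobolev norm via the field $X$. Via elliptic regularization (adding $\ep(\Delta+\text{boundary terms})$ and passing $\ep\to 0^+$), a priori tangential estimates become genuine regularity. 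Combining this with the interior elliptic regularity of $\Box_{q,t}$ upgrades full $H^s$ control: one obtains
\[
\|N_{q,t}\alpha\|_{t,s,\Om}^2 \;\leq\; C_s \|\alpha\|_{t,s,\Om}^2 + C_{t,s}\|\alpha\|_t^2,
\]
which is exactly the shape of \eqref{eqn:Hs,L2 est}, and analogous estimates propagate through $\dbar N_{q,t}$, $\dbarst N_{q,t}$, etc., by composing with first-order operators that are bounded between the correctly shifted $H^s$ spaces.

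Once (v) is in hand, the remaining items are formal consequences. The closed range statements (i)--(iii) follow from the continuity of $N_{q,t}$ on $H^s$ in the standard way: if $\dbar u_k\to g$ in $H^s_{0,\tilde q+1}$, then $v=\dbarst N_{q,t}g$ lies in $H^s_{0,\tilde q}$ and satisfies $\dbar v = g$, showing $\Ran(\dbar)$ is closed, and likewise for $\dbarst$; closed range of $\Box_{q,t}$ is immediate from these two and the absence of harmonic forms. Statement (iv) follows because the basic estimate forces $(f,f)_t=0$ whenever $\dbar f=\dbarst f=0$, so $\ker\Box_{q,t}=\{0\}$ on every $H^s$. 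Statement (vi) follows from (v) by setting $u=\dbarst_t N_{q,\tilde q,t}\alpha$; the hypothesis $\dbar\alpha=0$ together with $N_{q,t}\dbar=\dbar N_{q-1,t}$ (which we verify using the orthogonal decomposition on the $L^2$ level and then extend to $H^s$ by the continuity just proved) gives $\dbar u=\alpha$. Finally, for (vii) with $m=\infty$, one applies (vi) for every $s$ and invokes the standard $H^s$-to-$C^\infty(\overline\Om)$ passage on each bounded subdomain, combined with interior hypoellipticity of $\dbar$ and the uniform-in-$s$ structure of the weighted norms, to conclude $u\in C^\infty_{0,\tilde q-1}(\overline\Om)$.

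The main obstacle I anticipate is the tangential differentiation step in an unbounded, weighted setting. Each time a tangential derivative lands on $e^{-t|z|^2}$ it produces an unbounded coefficient, and each time it lands on $\Upsilon$ or on a defining function it produces terms that are only uniformly bounded, not compactly supported. The whole point of the $H^s_{0,\tilde q}(\Om,e^{-t|z|^2},X)$ norms of \cite{HaRa14} is to absorb precisely these terms, but verifying that the commutator error estimates are controlled by $C_s\|u\|_{t,s,\Om}^2 + C_{t,s}\|u\|_t^2$ rather than by something worse requires the asymptotic non-radiality of Definition \ref{defn:asymptotically_non_radial} in order to ensure Rellich on boundary restrictions, and requires a careful choice of tangential frame on the tubular neighborhood guaranteed by positive reach. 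Matching these three ingredients — the boundary geometry, the weighted Sobolev scale, and the $\Upsilon$-modified basic identity \eqref{eqn:BI} — is where the technical heart of the argument lies.
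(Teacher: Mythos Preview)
Your overall strategy matches the paper's: start from the $L^2$ theory of \cite{HaRa17}, use elliptic regularization $\Box_{q,t}^\ep$ to obtain genuine regularity, differentiate tangentially, control commutators using the uniform $C^{m-1}$ bounds on $\rho$ and $\Upsilon$ together with the weighted Sobolev calculus of \cite{HaRa14}, and then pass $\ep\to 0$ to get \eqref{eqn:Hs,L2 est} for $N_{q,t}$ and its relatives. The formal deductions of (i)--(iv), (vi), (vii) from (v) are also in line with the paper.

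There is, however, one concrete gap. You write that tangential estimates combine ``with the interior elliptic regularity of $\Box_{q,t}$'' to give full $H^s$ control. That is not how the paper closes the loop, and on an unbounded domain it is not clear this would work: interior ellipticity only controls derivatives on sets bounded away from $\bd\Omega$, and you still need to control \emph{normal} derivatives in a full tubular neighborhood of the boundary, uniformly in the weight. The paper handles this by a structural lemma (its Lemma~\ref{lem:strengthening of normal deriv lemma from Straube} and Proposition~\ref{prop:controlling normal derivatives}): in a special boundary frame, for components $u_J$ with $n\notin J$ the normal derivative $D_\nu u_J$ is expressed via coefficients of $\dbar u$ plus tangential derivatives, while for $n\in J$ the \emph{weighted} normal derivative $D_\nu^t u_J$ is expressed via $\vartheta^t u$ plus weighted tangential derivatives. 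Iterating this gives
\[
\|v\|_{t,k,\Om}^2 \leq C_k\bigl(\|\dbar v\|_{t,k-1,\Om}^2 + \|\vartheta^t v\|_{t,k-1,\Om}^2 + \|(\nabla_T^t)^k v\|_t^2\bigr) + C_{t,k}\|v\|_{t,k-1,\Om}^2,
\]
which is what actually reduces the full $H^k$ norm to tangential derivatives plus lower order, globally on $\Omega$. Without this step your argument does not explain why bounding $\|D^\alpha_\tau N^\ep_{q,t}f\|_t$ for tangential $D^\alpha_\tau$ controls $\|N^\ep_{q,t}f\|_{t,k,\Om}$.

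Two smaller points. First, the basic estimate (Proposition~\ref{prop:basic estimate}) is proved only at level $q$, not for $\tilde q=q-1,q,q+1$ as you assert; closed range at the adjacent degrees is deduced \emph{a posteriori} from the continuity of the solution operators in (v), not from a basic estimate at those degrees. Second, in (vi) you invoke $N_{q,t}\dbar=\dbar N_{q-1,t}$, but $N_{q-1,t}$ need not exist under the hypotheses; the paper instead uses $\dbars_t N_{q,t}$ (for $\tilde q=q$) and $(\dbar N_{q,t})^*$ (for $\tilde q=q+1$), both of which are already covered by (v).
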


\begin{remark}The $s=0$ case of Theorem \ref{thm:closed range for dbar} for parts (i) - (viii) is the main result in \cite{HaRa17}. Also,
the operator $\dbars_t N_{q,t}$ is the canonical solution operator for the $\dbar$ equation, and $(\dbar N_{q,t})^*$ is the canonical solution operator for the $\dbar$-equation if
$N_{q+1,t}$ exists. The latter operator may exist as a consequence of Proposition \ref{prop:percolation} and Theorem \ref{thm:closed range for dbar}.
Similarly, the operator $\dbar N_{q,t}$ is the canonical solution operator for $\dbars_t$  on $(0,q)$-forms and $(\dbars_t N_{q,t})^* = N_{q,t}\dbar$ is the canonical solution operator for $\dbars_t$ on $(0,q-1)$-forms if
$N_{q-1,t}$ exists. The operator $N_{q-1,t}$ will exist if $q - \Tr\Upsilon < 0$.
\end{remark}

\begin{remark} We wish to point out a slight errata in Lemma 2.3 from \cite{HaRa13}.  A $C^2$ domain with positive reach must have a uniformly $C^2$ defining function, but the converse is not necessarily true. Consequently, Theorem 2.4 in \cite{HaRa17} needs to include this hypothesis, as it relies on the results from \cite{HaRa13}.
\end{remark}

\begin{example}
In \cite{HaRa17}, we show that for any $1\leq p\leq n-1$ the quadric defined by
\[
  \rho(z)=\sum_{j=1}^p|z_j|^2-\sum_{j=p+1}^n|z_j|^2+1
\]
is a $Z(q)$ domain for any $q\neq n-p-1$ with a uniformly $C^\infty$ defining function.  One can easily check that such domains are also asymptotically non-radial.
\end{example}

\section{The basic estimate}\label{sec:basic estimate, proof of L^2 case}
In this paper, we will use the weight $\vp=t|z|^2$, though we could also consider more general weight functions.
For example, given a generic $C^2$ weight $\varphi$, the final (non-error term) in \ref{eqn:BI} would be
\[
  \sum_{I\in\I_{q-1}}\sum_{j,k=1}^n \big(\varphi_{j\bar k} f_{jI}, f_{kI}\big)_\varphi-\sum_{J\in\I_{q}}\sum_{j,k=1}^n \big(\varphi_{j\bar k}\Upsilon^{\bar k j} f_{J}, f_{J}\big)_\varphi
\]
The price of the more general weight is that we  would have to change (iii) in Definition \ref{defn:weak Z(q)} to
\[
  \inf_{z\in b\Omega}\lambda_1+\cdots+\lambda_q-\sum_{j,k=1}^n\varphi_{j\bar k}\Upsilon^{\bar k j}>0,
\]
where $\lambda_1,\ldots,\lambda_n$ are the eigenvalues of $\varphi_{j\bar k}$ arranged in increasing order (see the definition of $q$-compatible functions in \cite{HaRa11}).
We wish, however, to avoid this technicality. The basic estimate is the content of Proposition \ref{prop:basic estimate}, and it quickly follows from the basic identity \ref{eqn:BI} and the following two
lemmas from \cite{HaRa17}. The first details the extension of $\Upsilon$ into the interior of $\Om$, and the second is a density lemma.

\begin{lem}\label{lem:extension of Upsilon}
Suppose $\Omega$ has a connected boundary, a uniformly $C^m$ defining function for some $m\geq 2$, and satisfies weak $Z(q)$ for some $1\leq q\leq n-1$.
Let $\Upsilon$ be as in Definition \ref{defn:weak Z(q)}. There exists a hermitian matrix
$\tilde\Upsilon$ of functions on $\mathbb{C}^n$ that are uniformly bounded in $C^{m-1}$ satisfying
\begin{enumerate}\renewcommand{\labelenumi}{(\roman{enumi})}
\item All eigenvalues of $\tilde\Upsilon$ lie in the interval $[0,1]$.

\item $\tilde\Upsilon|_{b\Omega}=\Upsilon$, so that $\mu_1+\cdots+\mu_q- \sum_{j,k=1}^n\tilde\Upsilon^{\bar k j}\tilde\delta_{j\bar k}\geq 0$ on $\bd\Omega$ where $\mu_1,\ldots,\mu_{n-1}$ are the eigenvalues of the Levi form in increasing order.

\item $\inf_{z\in\bar\Omega} \{ |q-\Tr(\tilde\Upsilon)|\} >0$.

\item There exists $\ep>0$ so that on the neighborhood $U_\ep$ of $\bd\Omega$ we have
 \begin{equation}\label{eqn:tau rho_k sum=0}
 \sum_{j=1}^n \tilde\Upsilon^{\bar k j}\tilde\delta_j=0.
 \end{equation}
\end{enumerate}
\end{lem}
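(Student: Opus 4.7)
The plan is to construct $\tilde\Upsilon$ in two stages: first extend $\Upsilon$ from $b\Omega$ to a tubular neighborhood via normal projection along the signed distance function, then glue to a constant multiple of the identity outside via a smooth cutoff. The reason for choosing the signed distance function $\tilde\delta$ as the defining function in this construction is that property (iv) will then come essentially for free, since $\nabla\tilde\delta$ is constant along normal fibers in the tubular neighborhood.

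First I would invoke the tubular neighborhood results from \cite{HaRa13}: because $\rho$ is uniformly $C^m$ with $\inf_U|\nabla\rho|>0$, there exists $\ep_0>0$ such that on $U_{\ep_0}=\{z:\dist(z,\bd\Om)<\ep_0\}$ the signed distance $\tilde\delta$ is uniformly $C^m$ and the nearest-point projection $\pi:U_{\ep_0}\to\bd\Om$ given by $\pi(z)=z-\tilde\delta(z)\nabla\tilde\delta(z)$ is well-defined and uniformly $C^{m-1}$. Then I define on $U_{\ep_0}$
\[
\tilde\Upsilon^{\bar k j}(z):=\Upsilon^{\bar k j}(\pi(z)).
\]
This is a hermitian matrix of uniformly $C^{m-1}$ functions, and since the eigenvalues of a matrix depend only on the matrix entries, $\tilde\Upsilon(z)$ has the same eigenvalues as $\Upsilon(\pi(z))$, hence lies in $[0,1]$; likewise $\Tr\tilde\Upsilon(z)=\Tr\Upsilon(\pi(z))$, so by continuity $|q-\Tr\tilde\Upsilon|$ stays bounded below in $U_{\ep_0}$ (shrinking $\ep_0$ if necessary). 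The key observation is that along the normal segment from $\pi(z)$ to $z$ the gradient $\nabla\tilde\delta$ is constant and equals the unit normal at $\pi(z)$, so $\tilde\delta_j(z)=\tilde\delta_j(\pi(z))$. Since on $\bd\Om$ the normalization gives $\tilde\delta_j=\rho_j/|\nabla\rho|$ and $\sum_j\Upsilon^{\bar kj}\rho_j=0$, we obtain
\[
\sum_{j=1}^n \tilde\Upsilon^{\bar kj}(z)\tilde\delta_j(z)=\sum_{j=1}^n \Upsilon^{\bar kj}(\pi(z))\tilde\delta_j(\pi(z))=0
\]
on all of $U_{\ep_0}$, establishing (iv) with $\ep=\ep_0$.

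Next I would patch to all of $\C^n$. Pick $c\in\{0,1\}$ with $c=0$ if $q-\Tr\Upsilon>0$ on $\bd\Om$ and $c=1$ if $q-\Tr\Upsilon<0$ on $\bd\Om$ (one of these holds globally by connectedness of $\bd\Om$ and Proposition \ref{prop:percolation}-style continuity); note $q-nc$ then has the same sign as $q-\Tr\Upsilon$. Choose a smooth cutoff $\chi\in C^\infty(\C^n)$ with $\chi\equiv 1$ on $U_{\ep_0/2}$ and $\supp\chi\subset U_{\ep_0}$, and set
\[
\tilde\Upsilon^{\bar kj}(z):=\chi(z)\,\Upsilon^{\bar kj}(\pi(z))+\bigl(1-\chi(z)\bigr)\,c\,\delta^{jk},
\]
extending by $c\,\delta^{jk}$ outside $U_{\ep_0}$. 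Because this is a pointwise convex combination of two hermitian matrices with spectra in $[0,1]$, (i) is preserved; (ii) is immediate since $\chi\equiv 1$ and $\pi=\mathrm{id}$ on $\bd\Om$; (iv) holds on the smaller neighborhood $U_{\ep_0/2}$ where $\chi\equiv 1$. For (iii), the trace is $\chi\Tr\Upsilon(\pi(z))+(1-\chi)nc$, so $q-\Tr\tilde\Upsilon$ is a convex combination of two quantities of the same sign, each bounded away from $0$, hence uniformly bounded away from $0$ on $\bar\Om$.

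The main technical obstacle is ensuring that the signed distance function really is uniformly $C^m$ on a tubular neighborhood of uniform radius under the mere assumption of a uniformly $C^m$ defining function, without the positive reach hypothesis appearing elsewhere in the paper. This is precisely the content of the companion results in \cite{HaRa13}, and the errata noted in the remark preceding this lemma clarifies that some uniform-neighborhood statement is available from a uniformly $C^m$ defining function; the construction above uses only that $\tilde\delta$ and $\pi$ are $C^{m-1}$ on some $U_{\ep_0}$ with $\ep_0>0$, which suffices. All remaining verifications reduce to routine linear algebra for convex combinations of hermitian matrices and the constant-along-normals identity for $\nabla\tilde\delta$.
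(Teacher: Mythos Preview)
The paper does not prove this lemma here; it is quoted verbatim from \cite{HaRa17}, so there is no in-paper argument to compare against. Your construction---pull back $\Upsilon$ along the nearest-point projection $\pi$ onto $b\Omega$ to obtain an extension on a tubular neighborhood, then take a convex combination with the constant matrix $cI$ (with $c\in\{0,1\}$ chosen according to the sign of $q-\Tr\Upsilon$) via a cutoff supported in the tube---is the standard and natural approach and is almost certainly what appears in \cite{HaRa17}. The verification of (i)--(iv) is correct: the convex-combination argument for the spectrum and the trace is clean, and the key identity $\nabla\tilde\delta(z)=\nabla\tilde\delta(\pi(z))$ along normal fibers is exactly what makes (iv) automatic.

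The one point that deserves comment is the hypothesis mismatch you already flagged: your construction needs $\pi$ (and hence $\tilde\delta$) to be uniformly $C^{m-1}$ on some $U_{\ep_0}$, which is essentially a positive-reach assumption. The lemma as stated omits positive reach, but the paper's own errata remark (immediately after Theorem~\ref{thm:closed range for dbar}) concedes that a uniformly $C^m$ defining function does not by itself guarantee this, and that the main theorem in \cite{HaRa17} should carry the positive-reach hypothesis. So the gap you identify is a gap in the stated hypotheses of the lemma (inherited from \cite{HaRa17}), not in your argument; under the positive-reach hypothesis actually used in Theorem~\ref{thm:closed range for dbar}, your proof goes through without difficulty.
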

We do not distinguish between $\Upsilon$ and its extension $\tilde\Upsilon$.

\begin{lem}
\label{lem:bounded_density}
  Let $\Omega\subset\mathbb{C}^n$ be a $C^m$ domain, $m\geq 2$, and let $f\in L^2_{0,q}(\Omega,e^{-\varphi})\cap\dom\dbar\cap\dom{\dbar^*_\varphi}$ for some $C^2$ function $\varphi$.  Then there exists a sequence of bounded $C^m$ domains $\{\Omega_j\}$ and functions $f_j\in C^{m-1}(\Omega)$ such that $\Omega_j\cap B(0,j+2)=\Omega\cap B(0,j+2)$, $f_j\equiv 0$ on $\Omega\backslash B(0,j+2)$, $f_j|_{\Omega_j}\in\dom{\dbar^*_\varphi}$, and
  \begin{multline*}
    \left\|\dbar f_j\right\|_{L^2(\Omega_j,e^{-\varphi})}+\left\|\dbar^*_\varphi f_j\right\|_{L^2(\Omega_j,e^{-\varphi})}+\left\|f_j\right\|_{L^2(\Omega_j,e^{-\varphi})}\\
    \rightarrow \left\|\dbar f\right\|_{L^2(\Omega,e^{-\varphi})}+\left\|\dbar^*_\varphi f\right\|_{L^2(\Omega,e^{-\varphi})}+\left\|f\right\|_{L^2(\Omega,e^{-\varphi})}
  \end{multline*}
\end{lem}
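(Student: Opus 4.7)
The strategy is to first close off $\Om$ to bounded $C^m$ domains $\Om_j$, truncate $f$ by a cutoff to obtain a compactly supported approximation, and then apply the classical density of smooth forms in $\dom{\dbar^*_\vp}$ on a bounded $C^m$ domain, finishing with a diagonal selection.

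For the first step, I construct $\Om_j$ by radially perturbing a $C^m$ defining function $\rho$ of $\Om$ near infinity. Choose $\psi_j\in C^\infty([0,\infty))$ that vanishes on $[0,j+2]$, is strictly increasing on $(j+2,\infty)$, and grows fast enough that $\tilde\rho_j(z):=\rho(z)+\psi_j(|z|)$ becomes positive outside a sufficiently large ball; taking $\psi_j'$ to dominate $|\nabla\rho|$ on the transition annulus guarantees $\nabla\tilde\rho_j\neq 0$ on $\{\tilde\rho_j=0\}$. Then $\Om_j:=\{\tilde\rho_j<0\}$ is a bounded $C^m$ domain with $\Om_j\cap B(0,j+2)=\Om\cap B(0,j+2)$. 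Next, pick a radial cutoff $\chi_j\in C^\infty_c(B(0,j+2))$ with $\chi_j\equiv 1$ on $B(0,j+1)$ and $|\nabla\chi_j|$ uniformly bounded in $j$. The form $\chi_j f$ lies in $\dom\dbar\cap\dom{\dbar^*_\vp}$ on $\Om_j$ since multiplication by a smooth scalar preserves the normal-component-vanishing boundary condition, and since $\chi_j f\equiv 0$ near $\bd\Om_j\setminus B(0,j+2)$. By the product rule,
\[
\dbar(\chi_j f)=\chi_j\dbar f+\dbar\chi_j\wedge f,\qquad \dbar^*_\vp(\chi_j f)=\chi_j\dbar^*_\vp f+E_j(f),
\]
where $E_j(f)$ is pointwise bounded by $C|\nabla\chi_j||f|$ and supported in $\{j+1\leq|z|\leq j+2\}$. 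Since $f,\dbar f,\dbar^*_\vp f\in L^2(\Om,e^{-\vp})$, dominated convergence yields $\chi_j f\to f$, $\dbar(\chi_j f)\to\dbar f$, and $\dbar^*_\vp(\chi_j f)\to\dbar^*_\vp f$ in the weighted $L^2$ norm.

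For the second step, I apply the classical Friedrichs density theorem on the bounded $C^m$ domain $\Om_j$. Since $e^{-\vp}$ is bounded above and below on $\Om_j$, the weighted version reduces to the standard one, providing a sequence $g_{j,k}\in C^{m-1}(\overline{\Om_j})\cap\dom{\dbar^*_\vp}$ converging to $\chi_j f$ in graph norm as $k\to\infty$. Choose a second cutoff $\tilde\chi_j\in C^\infty_c(B(0,j+2))$ equal to $1$ on a neighborhood of $\supp\chi_j f$, and set $f_{j,k}:=\tilde\chi_j g_{j,k}$. Then $f_{j,k}\in C^{m-1}(\Om)$, vanishes on $\Om\setminus B(0,j+2)$, and $f_{j,k}|_{\Om_j}\in\dom{\dbar^*_\vp}$. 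Since $(\tilde\chi_j-1)\chi_j f=0$, we have $f_{j,k}-\chi_j f=\tilde\chi_j(g_{j,k}-\chi_j f)$, so the graph-norm distance between $f_{j,k}$ and $\chi_j f$ is controlled by that between $g_{j,k}$ and $\chi_j f$, up to $C^1$-norms of $\tilde\chi_j$.

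Finally, a diagonal selection: for each $j$, pick $k(j)$ so that $f_j:=f_{j,k(j)}$ satisfies $\|f_j-\chi_j f\|_\vp+\|\dbar(f_j-\chi_j f)\|_\vp+\|\dbar^*_\vp(f_j-\chi_j f)\|_\vp<1/j$. The triangle inequality together with the convergence $\chi_j f\to f$ from the first step gives the stated norm convergence. The main obstacle is the construction of $\Om_j$ as a genuine bounded $C^m$ domain with nonvanishing gradient on its boundary — this is handled by the radial perturbation above with $\psi_j'$ tuned large enough in the transition annulus. A secondary technical point, that Friedrichs' density extends to the weighted setting, is routine because $\vp\in C^2$ is bounded on each $\Om_j$.
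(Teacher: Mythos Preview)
Your proof is correct and follows precisely the standard strategy. The present paper does not actually prove this lemma (it is quoted from \cite{HaRa17}), but your scheme---truncate by a radial cutoff, pass to a bounded $C^m$ domain agreeing with $\Omega$ on a large ball, invoke Friedrichs-type density on the bounded domain, multiply again by a cutoff to restore compact support, and diagonalize---is exactly the argument the paper gives for the closely related Lemma~\ref{lem:density}; the only place you supply more than the paper is the explicit radial perturbation $\rho+\psi_j(|z|)$ where the paper simply asserts such $\Omega_j$ exist.
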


\begin{prop}\label{prop:basic estimate}
Let $\Omega$ have a connected boundary, a uniformly $C^m$ defining function for some $m\geq 2$, and satisfy weak $Z(q)$ for some $1\leq q\leq n-1$.  Suppose $\varphi$ satisfies $\ddbar\varphi=t\ddbar|z|^2$. Then for any
constant $\ep>0$, there exists $T>0$ so that if 
\begin{enumerate}
  \item either $t\leq-T$ and $(q-\Tr\Upsilon) <0$ or $t\geq T$ and $(q-\Tr\Upsilon) >0$, and
  \item $f\in L^2_{0,q}(\Omega,e^{-\varphi})\cap\Dom(\dbar)\cap\Dom(\dbars_\varphi)$,
\end{enumerate}
then
\[
\ep\left(\| \dbar f \|_\varphi^2 + \| \dbars_\varphi f\|_\varphi^2\right) \geq \| f \|_\varphi^2.
\]
\end{prop}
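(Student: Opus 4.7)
The plan is to derive Proposition \ref{prop:basic estimate} directly from the basic identity \eqref{eqn:BI} by discarding the nonnegative terms, absorbing the junk terms through Cauchy--Schwarz, and exploiting the sign of $q-\Tr(\Upsilon)$ together with large $|t|$ to absorb the $O(\|f\|_\vp^2)$ error.

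First, by Lemma \ref{lem:bounded_density}, it suffices to establish the estimate for $C^{m-1}$ forms $f$ supported in bounded subdomains and lying in $\Dom(\dbars_\vp)$, since the inequality then passes to the limit. Next, by Lemma \ref{lem:extension of Upsilon} we extend $\Upsilon$ to a hermitian matrix on $\C^n$ preserving properties (i)--(iii) and satisfying $\sum_j \Upsilon^{\bar k j}\rho_j=0$ in a neighborhood of $\bd\Omega$, so that the boundary integrands in the second line of \eqref{eqn:BI} combine to
\[
\sum_{I\in\I_{q-1}}\!\int_{\bd\Om}\!\!\Big\langle (\rho_{j\bar k}-\delta_{IJ}\Upsilon^{\bar k j}\rho_{j\bar k})f_{jI},f_{kI}\Big\rangle e^{-\vp}d\sigma,
\]
which, after diagonalizing the Levi form at each boundary point exactly as in \cite[Lemma 2.8]{HaRa15}, is nonnegative by property (ii) of weak $Z(q)$. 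The first line of \eqref{eqn:BI} is also nonnegative, since the eigenvalues of $\Upsilon$ and of $I-\Upsilon$ all lie in $[0,1]$. All three of these terms may therefore be discarded.

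What remains is to bound the junk terms on the third line of \eqref{eqn:BI}. Because $\Upsilon\in C^{m-1}_b$, the coefficients $\p\Upsilon^{\bar k j}/\p z_j$ and $\p\Upsilon^{\bar k j}/\p\bar z_k$ are uniformly bounded. Thus, for any small $\eta>0$, Cauchy--Schwarz on the inner products
\[
\Big(\tfrac{\p\Upsilon^{\bar k j}}{\p\bar z_k}\Upsilon^{\bar j\ell}L^\vp_\ell f_J,f_J\Big)_\vp,\qquad
\Big(\tfrac{\p\Upsilon^{\bar k j}}{\p z_j}(I_{k\ell}-\Upsilon^{\bar\ell k})\tfrac{\p f_J}{\p\bar z_\ell},f_J\Big)_\vp
\]
gives a bound of the form
\[
\eta\sum_{J,j,k}\Big[(\Upsilon^{\bar k j}L^\vp_j f_J,L^\vp_k f_J)_\vp+((I-\Upsilon)^{\bar k j}\tfrac{\p f_J}{\p\bar z_k},\tfrac{\p f_J}{\p\bar z_j})_\vp\Big]+C_\eta\|f\|_\vp^2.
\]
The first two bracketed quantities are precisely the nonnegative interior terms from the first line of \eqref{eqn:BI}, so with $\eta$ sufficiently small they may be absorbed. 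We are left with
\[
\|\dbar f\|_\vp^2+\|\dbars_\vp f\|_\vp^2\geq t\big((q-\Tr(\Upsilon))f,f\big)_\vp-C'\|f\|_\vp^2,
\]
for a constant $C'$ depending only on $\|\Upsilon\|_{C^2}$ and $\eta$.

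By property (iii) of weak $Z(q)$ there is a $\delta>0$ with $|q-\Tr(\Upsilon)|\geq\delta$ on $\bar\Om$; under the sign hypothesis of the proposition, $t(q-\Tr(\Upsilon))\geq|t|\delta$ pointwise. Hence
\[
\|\dbar f\|_\vp^2+\|\dbars_\vp f\|_\vp^2\geq(|t|\delta-C')\|f\|_\vp^2.
\]
Choosing $T$ large enough that $|t|\delta-C'\geq 1/\ep$ yields the desired estimate. The main obstacle is the bookkeeping in the absorption step, where one must simultaneously control both first-order terms in $L^\vp_j$ and in $\p/\p\bar z_k$ against the two distinct quadratic forms on the first line of \eqref{eqn:BI}; the fact that $\Upsilon$ and $I-\Upsilon$ are complementary nonnegative matrices is what makes this absorption possible.
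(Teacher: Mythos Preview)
Your argument is correct and follows essentially the same route the paper sketches: reduce to smooth compactly supported forms via Lemma \ref{lem:bounded_density}, extend $\Upsilon$ via Lemma \ref{lem:extension of Upsilon}, discard the nonnegative boundary terms using property (ii) of weak $Z(q)$, absorb the third-line junk terms into the nonnegative first-line quadratic forms by Cauchy--Schwarz (exploiting $\Upsilon^2\leq\Upsilon$ and $(I-\Upsilon)^2\leq I-\Upsilon$), and then use large $|t|$ together with property (iii) to dominate the residual $O(\|f\|_\vp^2)$ term. One small point of presentation: you first say the first-line terms ``may therefore be discarded'' and then use them to absorb the junk terms---just reorder so the absorption happens before you throw away the nonnegative remainder.
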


%
%
\section{Solvability of $\dbar$ in $H^s_{(0,q)}(\Omega,e^{-t|z|^2})$}
\label{sec:weighted_sobolev_spaces}

\subsection{Definition of the Sobolev spaces $H^s(\Omega,e^{-t|z|^2})$}
Define the weighted differential operators
\[
X_j^t = \frac{\p}{\p x_j} - 2tx_j = e^{t|z|^2}\frac{\p}{\p x_j} e^{-t|z|^2} ,\quad 1\leq j\leq 2n
\]
and
\[
\nabla_X^t = (X_1^t,\dots, X_{2n}^t).
\]
\begin{defn} \label{defn:sobo space, weighted deriv}
For a nonnegative $k\in\Z$, define the weighted Sobolev space
\[
H^k(\Omega,e^{-t|z|^2},X^t) = \{ f\in L^2(\Omega,e^{-t|z|^2}) : (X^t)^\alpha f \in L^2(\Omega,e^{-t|z|^2}) \text{ for } |\alpha|\leq k \}
\]
where $\alpha = (\alpha_1,\dots,\alpha_{2n})$ is an $2n$-tuple of nonnegative integers and
\[
  (X^t)^\alpha = (X_1^t)^{\alpha_1}\cdots (X_{2n}^t)^{\alpha_{2n}}.
\]
$H^k(\Omega,e^{-t|z|^2},X^t)$ has the norm
\[
\| f \|_{t,k,\Omega}^2=  \sum_{|\alpha|\leq k} \| (X^t)^\alpha f \|_t^2 .
\]
We suppress writing $\Omega$ when the domain is clear.
Also, let
\begin{multline*}
H^k_0(\Omega,e^{-t|z|^2},X^t) \\
= \left\{ g\in H^k(\Omega,e^{-t|z|^2},X^t) : \text{ there exists } \{\psi_\ell\}\subset C^\infty_c(\Omega) \text{ satisfying }\lim_{\ell\rightarrow\infty} \| g-\psi_\ell \|_{t,k}=0\right\}.
\end{multline*}
In other words, $H^k_0(\Omega,e^{-t|z|^2},X^t)$ is the closure of $C^\infty_c(\Omega)$ in the $H^k(\Omega,e^{-t|z|^2},X^t)$-norm.
\end{defn}

For $s>0$, we define $H^s(\Omega,e^{-t|z|^2},X^t)$ by real interpolation. The Sobolev space theory was worked out
by the authors in \cite{HaRa14}. As a consequence of Proposition 3.5 in \cite{HaRa14}, we have the following lemma.

\begin{lem}\label{lem: dom dbar in H^1}
Assume that $\Omega$ is asymptotically non-radial and has a uniformly $C^2$ defining function. Then $H^1_{0,q}(\Omega,e^{-t|z|^2},X^t) \subset \Dom(\dbar)$.
\end{lem}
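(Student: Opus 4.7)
Given $f = \sum_{J \in \I_q} f_J\, d\z_J \in H^1_{0,q}(\Om, e^{-t|z|^2}, X^t)$, I want to verify that $f \in \Dom(\dbar)$. Since $f \in L^2_{0,q}(\Om, e^{-t|z|^2})$ is immediate, it remains to show that the distributional $\dbar f = \sum_{J,K,k} \ep^{kJ}_K (\p f_J/\p\z_k)\, d\z_K$ lies in $L^2_{0,q+1}(\Om, e^{-t|z|^2})$; for this it suffices to show that each component $\p f_J/\p\z_k$ belongs to $L^2(\Om, e^{-t|z|^2})$.

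The plan is to split $\p/\p\z_k$ into a weighted-derivative piece and a pure multiplication operator. Using $X^t_j = \p/\p x_j - 2t x_j$ together with $\p/\p\z_k = \tfrac{1}{2}(\p/\p x_{2k-1} + i\,\p/\p x_{2k})$, a direct computation gives
\[
\frac{\p}{\p\z_k} \;=\; \tfrac{1}{2}\bigl(X^t_{2k-1} + i X^t_{2k}\bigr) + t z_k.
\]
Applied to $f_J$, the first summand is a fixed constant-coefficient linear combination of the weighted derivatives $X^t_j f_J$, hence lies in $L^2(\Om, e^{-t|z|^2})$ immediately from the definition of $H^1(\Om, e^{-t|z|^2}, X^t)$. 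The lemma is therefore reduced to showing $z_k f_J \in L^2(\Om, e^{-t|z|^2})$ for every $J,k$, equivalently that the weighted Sobolev norm $\|f_J\|_{t,1}^2$ dominates $\int_\Om |z|^2 |f_J|^2 e^{-t|z|^2}\, dV$.

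This last estimate is exactly what Proposition 3.5 of \cite{HaRa14} supplies under our hypotheses: for the weight $\vp = t|z|^2$, asymptotic nonradiality combined with a uniformly $C^2$ defining function is sufficient to prove that the $H^1$-norm built from $X^t$-derivatives controls $\|(\nabla\vp) f_J\|_t$, and since $|\nabla\vp| = 2|t||z|$ this yields $\|z f_J\|_t \les \|f_J\|_{t,1}$. Putting the two pieces together gives $\p f_J/\p\z_k \in L^2(\Om, e^{-t|z|^2})$ for every $J,k$, hence $\dbar f \in L^2_{0,q+1}(\Om, e^{-t|z|^2})$ and $f \in \Dom(\dbar)$. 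The real substance lies in the cited proposition; the hard part there is that the natural integration-by-parts identity obtained from $x_j e^{-t|z|^2} = -\tfrac{1}{2t}\p_{x_j}(e^{-t|z|^2})$ produces a boundary contribution involving $x\cdot\nu$ on $\bd\Om$, and it is precisely the asymptotic nonradial hypothesis --- preventing $\nabla\vp$ from becoming essentially normal on $\bd\Om$ at infinity --- that allows the weighted trace/Rellich machinery of \cite{HaRa14} to absorb this surface term into the interior weighted $H^1$-norm.
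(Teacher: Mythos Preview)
Your proposal is correct and follows the same route as the paper, which simply records the lemma as ``a consequence of Proposition 3.5 in \cite{HaRa14}'' with no further argument. Your write-up just makes explicit the reduction: the paper later uses that proposition in the equivalent form $\|\nabla v\|_t^2\leq C\|\nabla_X^t v\|_t^2+C_t\|v\|_t^2$, which gives $\partial f_J/\partial\bar z_k\in L^2(\Omega,e^{-t|z|^2})$ directly without first isolating the $tz_k$ multiplication term, but your decomposition $\partial/\partial\bar z_k=\tfrac12(X^t_{2k-1}+iX^t_{2k})+tz_k$ and the resulting bound on $\|z f_J\|_t$ are exactly what underlies that inequality.
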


\subsection{Elliptic regularization.}
Before turning to the proof of Theorem \ref{thm:closed range for dbar} for $s>0$, we need to do some preliminary work.

For $\eps>0$, set
\begin{align*}
Q_t(u,v) &= (\dbar u,\dbar v)_t + (\dbars_t u, \dbars_t v)_t \\
Q_{t,\eps}(u,v) &= (\dbar u,\dbar v)_t + (\dbars_t u, \dbars_t v)_t  + \eps(\nabla_X^t u,\nabla_X^t v)_t
\end{align*}
We can prove the elliptic regularity for $\Box_{t,\eps} = \dbars_t\dbar+\dbar\dbars_t +\eps (\nabla_X^t)^* \nabla_X^t$.
\begin{prop}\label{prop:elliptic regularity for N^q_eps}
Let $\Omega$ satisfy the hypotheses of Theorem \ref{thm:closed range for dbar}.
For $0\leq s \leq m-2$, there exists a continuous operator $N_{q,t}^\eps : H^s_{0,q}(\Omega,e^{-t|z|^2},X^t) \to H^{s+2}_{0,q}(\Omega,e^{-t|z|^2},X^t) \cap \Dom(\Box^\ep_{q,t})$  so that
\begin{equation}
\label{eq:box_N}
\Box_{q,t}^\eps N_{q,t}^\eps u = u, \quad u\in H^s_{0,q}(\Omega,e^{-t|z|^2},X^t)
\end{equation}
and
\begin{equation}
\label{eq:N_box}
N_{q,t}^\eps \Box_{q,t}^\eps u = u, \quad u\in \Dom(\Box_{q,t}^\eps)\cap H^s_{0,q}(\Omega,e^{-t|z|^2},X^t).
\end{equation}
\end{prop}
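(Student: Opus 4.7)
The plan is to prove this by elliptic regularization: for each $\eps>0$, the sesquilinear form $Q_{t,\eps}$ is bounded and coercive on a suitable subspace of $H^1_{0,q}(\Om,e^{-t|z|^2},X^t)$, so Lax--Milgram produces a solution operator $N^\eps_{q,t}$, which then gains two derivatives via elliptic regularity adapted to the weighted setting.

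First I would set up the Hilbert space $\H := H^1_{0,q}(\Om,e^{-t|z|^2},X^t)\cap\Dom(\dbars_t)$, with norm $\|\cdot\|_{t,1,\Om}$. By Lemma \ref{lem: dom dbar in H^1}, $\H\subset \Dom(\dbar)\cap\Dom(\dbars_t)$, so $Q_{t,\eps}$ is defined and clearly bounded on $\H\times\H$. For coercivity, I would combine the basic estimate (Proposition \ref{prop:basic estimate}), which for $|t|\geq T$ gives $\|u\|_t^2\leq \eps_0(\|\dbar u\|_t^2+\|\dbars_t u\|_t^2)$ on $\Dom(\dbar)\cap\Dom(\dbars_t)$, with the elliptic term $\eps(\nabla_X^t u,\nabla_X^t u)_t$ controlling the weighted first derivatives. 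This yields $Q_{t,\eps}(u,u)\geq c_\eps \|u\|_{t,1,\Om}^2$ on $\H$ and, by Lax--Milgram, a unique $v=N^\eps_{q,t}u\in\H$ solving $Q_{t,\eps}(v,\phi)=(u,\phi)_t$ for every $\phi\in\H$, with $\|v\|_{t,1,\Om}\lesssim_\eps \|u\|_t$.

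Next I would upgrade the regularity. The operator $\Box^\eps_{q,t}=\dbar\dbars_t+\dbars_t\dbar+\eps(\nabla^t_X)^*\nabla^t_X$ is a second-order system whose principal symbol, after accounting for the $(0,q)$-form structure, is dominated by $\eps|\xi|^2 I$ and hence is uniformly elliptic. The natural boundary conditions coming from the variational formulation are $u\in\Dom(\dbars_t)$ together with $\dbar u\in\Dom(\dbars_t)$; these form a coercive (Agmon--Douglis--Nirenberg) boundary problem for $\Box^\eps_{q,t}$. Applying the weighted-Sobolev elliptic theory of \cite{HaRa14} (difference quotient arguments in the tangential directions, using the operators $X_j^t$, and inverting the remaining normal derivatives from the equation) gives, first, that $v\in H^2_{0,q}(\Om,e^{-t|z|^2},X^t)\cap\Dom(\Box^\eps_{q,t})$ whenever $u\in L^2$, and then by iteration that $v\in H^{s+2}$ whenever $u\in H^s$, with estimates of the form \eqref{eqn:Hs,L2 est}. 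Once this regularity is available, the weak identity $Q_{t,\eps}(v,\phi)=(u,\phi)_t$ can be integrated by parts against $\phi\in C^\infty_c(\Om)$ and then against general $\phi\in\H$ to obtain $\Box^\eps_{q,t}N^\eps_{q,t}u=u$. The reverse identity $N^\eps_{q,t}\Box^\eps_{q,t}u=u$ on $\Dom(\Box^\eps_{q,t})\cap H^s$ is immediate from uniqueness in $\H$.

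The main obstacle will be step three: boundary elliptic regularity uniform on the unbounded domain $\Om$, in weighted Sobolev spaces built from the operators $X_j^t$ whose coefficients grow linearly. The uniformly $C^m$ hypothesis on the defining function (\cite{HaRa13}) provides uniform local flattening maps, the asymptotically non-radial hypothesis supplies the Rellich-type property needed on $\bd\Om$, and the weighted tangential difference quotient machinery of \cite{HaRa14} converts the coercivity of $Q_{t,\eps}$ into tangential $H^{s+1}$ bounds; the remaining normal derivatives are then recovered from the ellipticity of $\Box^\eps_{q,t}$ as in the classical Kohn--Nirenberg argument. Everything else is bookkeeping within the framework already established in \cite{HaRa14,HaRa17}.
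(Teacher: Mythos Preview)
Your proposal is correct and follows essentially the same route as the paper: define $\H=H^1_{0,q}(\Omega,e^{-t|z|^2},X^t)\cap\Dom(\dbars_t)$, use the basic estimate to get coercivity of $Q_{t,\eps}$, apply Riesz/Lax--Milgram to produce $N^\eps_{q,t}$, and then invoke the weighted elliptic theory of \cite{HaRa14} for the $H^s\to H^{s+2}$ gain. The paper is simply more terse at the last step: rather than sketching the difference-quotient and normal-recovery argument, it verifies the sandwich $(H^1_0)_{0,q}\subset\H\subset H^1_{0,q}$ and cites \cite[Theorem~3.13]{HaRa14} as a black box.
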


\begin{proof}Using the notation of \cite{HaRa14}, we set $\X = H^1_{0,q}(\Omega,e^{-t|z|^2},X^t)\cap\Dom(\dbars_t)$. We may use Proposition  \ref{prop:basic estimate}
to see that for sufficiently large $|t|$, given any $u\in L^2_{0,q}(\Omega,e^{-t|z|^2})$, the map
$v\mapsto (u,v)_t$ is a continuous, conjugate linear functional on $\X$ since
\[
| (u,v)_t | \leq \|u\|_t \|v\|_t \leq \frac 1C \|u\|_t \big(Q_{t,\eps}(v,v)\big)^{1/2}
\]
where $C$ depends on $t$ but not on $\eps$. Thus, by the Riesz Representation Theorem, there exists a unique $N_{q,t}^\eps u \in \X$ so that
\[
(u,v)_t = Q_{t,\eps}(N_{q,t}^\eps u,v).
\]
Moreover, $N_{q,t}^\eps u \in \Dom(\Box_{q,t}^\eps)$ (this is standard -- see \cite{Str10}), as are the equalities \eqref{eq:box_N} and \eqref{eq:N_box}.

We now show that $N_{q,t}^\eps : H^s_{0,q}(\Omega,e^{-t|z|^2},X^t) \to H^{s+2}_{0,q}(\Omega,e^{-t|z|^2},X^t)$.
Since $\Dom(\dbars_t)$ is a closed subspace of $L^2_{0,q}(\Omega,e^{-t|z|^2})$ and $(H^1_0)_{0,q}(\Omega,e^{-t|z|^2},X^t)\subset\Dom(\dbars_t)$, it follows that
\[
(H^1_0)_{0,q}(\Omega,e^{-t|z|^2},X^t)\subset \X \subset H^1_{0,q}(\Omega,e^{-t|z|^2},X^t).
\]
Moreover, $Q_{t,\eps}(\cdot,\cdot)$ is strictly coercive over $\X$, so it follows from \cite[Theorem 3.13]{HaRa14} that $N_{q,t}^\eps : H^s_{0,q}(\Omega,e^{-t|z|^2},X^t) \to H^{s+2}_{0,q}(\Omega,e^{-t|z|^2},X^t)$
since $N_{q,t}^\eps u\in\X$ and $u\in H^s_{0,q}(\Omega,e^{-t|z|^2},X^t)$.
\end{proof}

We now introduce the concept of a tangential operator. We follow the notation of  \cite[\S4]{HaRa14}.
\begin{defn}\label{defn:tangential operator}
We call a first order differential operator $T^t$ \emph{weighted tangential} if there exists a vector field $T$ so that $T^t = T - tT|z|^2$ and $T\rho=0$. In other words,
the principal part of $T^t$ is tangential, and $(T^t)^*_t u= -Tu +O(u)$. If $\alpha$ is a multiindex and $(T^t)^\alpha = T^t_{\alpha_1}\cdots T^t_{\alpha_{|\alpha|}}$ where
each $T_{\alpha_j}^t$ is tangential, then we say that $(T^t)^\alpha$ is \emph{weighted tangential of order $|\alpha|$}.
\end{defn}

\begin{remark} It will be important that applying a (weighted) tangential derivative  preserves $\Dom(\dbarst)$. In order to see this, we fix an atlas of boundary charts and define the action of a tangential derivative to a form expressed
in the boundary coordinates to act componentwise. This will locally preserve $\Dom(\dbarst)$, and we can patch these together to obtain a global operator preserving $\Dom(\dbarst)$.  If we express our form in other coordinates, this will only introduce lower order terms with $C^{m-2}$ coefficients, and we will see that this causes no difficulty.  For more details, see
\cite[Section 5.2]{ChSh01} or the discussion in 2.3 of \cite{Str10}. When we differentiate with respect to tangential derivatives below, we are implicitly doing so in a way that preserves $\Dom(\dbarst)$.
\end{remark}

For a tangential operator $T^\alpha$, we will want to estimate $Q_{t,\eps}((T^t)^\alpha N^\eps_{q,t}, (T^t)^\alpha N^\eps_{q,t})$.  To do so, we will need to work with slightly smoother forms. To that end, we prove the following
density lemma that is a slight modification of \cite[Lemma 4.1]{HaRa15}.
\begin{lem}\label{lem:density} Let $\Omega\subset\C^n$ be a domain with a uniformly $C^m$ defining function, $m\geq3$,
and let $u\in H^1_{0,q}(\Omega,e^{-t|z|^2},X^t)\cap\Dom(\dbars_t)$. For any integer $k$ so that $2 \leq k \leq m-1$, there exists a sequence
$u_\ell\in C^k_{0,q}(\bar\Omega)\cap H^k_{0,q}(\Omega,e^{-t|z|^2},X^t) \cap \Dom(\dbars_t)$ converging to $u$ in the $H^1_{0,q}(\Omega,e^{-t|z|^2},X^t)$ norm.
\end{lem}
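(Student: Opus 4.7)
The plan is to follow the classical Kohn--Morrey--H\"ormander density argument, as already adapted to the weighted setting in \cite[Lemma 4.1]{HaRa15}. The strategy has three ingredients: (a) truncate in $|z|$ to reduce to forms with compact support, (b) localize by a partition of unity subordinate to a finite atlas of boundary charts (plus one interior chart), and (c) in each boundary chart, express $u$ in a special $(0,1)$-frame adapted to $\rho$ and mollify its coefficients by a Friedrichs mollifier after extending them across $\bd\Omega$ by even or odd reflection so that the boundary condition defining $\Dom(\dbars_t)$ is preserved.

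For the cutoff step, pick $\chi_R \in C^\infty_c(\C^n)$ with $\chi_R \equiv 1$ on $B(0,R)$, $\chi_R \equiv 0$ off $B(0,2R)$, and $|\nabla\chi_R| \lesssim R^{-1}$. Multiplication by the scalar $\chi_R$ manifestly preserves $\Dom(\dbars_t)$, and the product rule $X^t_j(\chi_R u) = \chi_R X^t_j u + (\partial\chi_R/\partial x_j)u$ forces $\chi_R u \to u$ in $H^1_{0,q}(\Omega,e^{-t|z|^2},X^t)$ as $R \to \infty$, so we may assume $u$ has bounded support. For step (c), on each boundary chart pick an orthonormal $(0,1)$-frame $\bar\omega_1,\ldots,\bar\omega_n$ with $\bar\omega_n = \dbar\rho/|\dbar\rho|$; since $\rho \in C^m$ the frame is $C^{m-1}$. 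Writing $u = \sum_J u_J\,\bar\omega_J$, the $\Dom(\dbars_t)$ condition becomes $u_J|_{\bd\Omega} = 0$ whenever $n \in J$. After flattening $\bd\Omega$ by a $C^m$ local diffeomorphism, extend each coefficient $u_J$ across $\{\rho = 0\}$ by even reflection when $n \notin J$ and by odd reflection when $n \in J$ (the odd reflection is admissible in $H^1$ precisely because the trace vanishes); then convolve with a standard mollifier $\psi_\eps$ to obtain $C^\infty$ functions $u^\eps_J \to u_J$ in $H^1$. The odd reflection forces $u^\eps_J|_{\{\rho=0\}} = 0$ for $n \in J$, so $\sum_J u^\eps_J \bar\omega_J$ remains in $\Dom(\dbars_t)$; reassembling via the partition of unity and converting back to the standard $d\z_J$ frame introduces multiplicative factors of $C^{m-1}$ regularity, yielding a form in $C^k_{0,q}(\bar\Omega)$ for any $k \leq m-1$.

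The main technical point is the interplay between the frame's regularity and the target smoothness $k$: because $\bar\omega_n = \dbar\rho/|\dbar\rho|$ is only $C^{m-1}$, the reconstructed form in the $d\z$-basis inherits at most $C^{m-1}$ regularity, which is exactly the restriction $k \leq m-1$ in the statement. The weight $e^{-t|z|^2}$ plays no essential role: after cutoff the weight and its derivatives are bounded above and below on the compact support, so convergence in the weighted norm is equivalent to convergence in the ordinary $H^1$ norm there, and the comparison between $\partial/\partial x_j$ and $X^t_j$ costs only a bounded zeroth-order term. The essentially new content beyond the $H^1$ density result of \cite[Lemma 4.1]{HaRa15} is that each mollified coefficient $u^\eps_J$ is automatically $C^\infty$ and hence in $H^k$ for every $k$, so membership in $H^k_{0,q}(\Omega,e^{-t|z|^2},X^t)$ for $k \leq m-1$ is immediate from the analysis above, and the only convergence one must verify is the $H^1$ convergence, which is the standard Friedrichs property.
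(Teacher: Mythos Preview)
Your argument is correct and follows essentially the same approach as the paper: truncate to bounded support, then invoke the Friedrichs--reflection density argument of \cite[Lemma~4.1]{HaRa15} (which you have unpacked explicitly rather than cited as a black box). The only cosmetic difference is that the paper introduces auxiliary bounded domains $\Omega_r$ agreeing with $\Omega$ on the support of the cutoff so that the bounded-domain lemma can be quoted verbatim, and then multiplies by the cutoff once more to transplant the approximants back to $\bar\Omega$; your direct chart-by-chart mollification avoids this detour but is otherwise the same construction.
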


\begin{proof}
Let $\chi:\mathbb{R}\rightarrow\mathbb{R}$ be a smooth cutoff function satisfying $\chi(x)\equiv 0$ on $(-\infty,0]$ and $\chi(x)\equiv 1$ on $[1,\infty)$.
For $r>0$, let $u_r(z)=u(z)\chi\left(\frac{(r+1)^2-|z|^2}{(r+1)^2-r^2}\right)$.  Observe that any fixed number of derivatives of
$\chi\left(\frac{(r+1)^2-|z|^2}{(r+1)^2-r^2}\right)$ are uniformly bounded in $r$ and supported in $B(0,r+1)\backslash \overline{B(0,r)}$.
This means $u_r$ is supported in $B(0,r+1)$ and $u_r$ converges to $u$ in $H^1_{0,q}(\Omega,e^{-t|z|^2},X)$ as $r\rightarrow\infty$.
Let $\Omega_r$ be a bounded $C^m$ domain satisfying $\Omega_r\cap B(0,r+1)=\Omega\cap B(0,r+1)$.  By a straight forward adaptation of Lemma 4.1 in \cite{HaRa15},
we can build a sequence $\{u_{\ell,r}\}\subset C^k_{0,q}(\overline\Omega_r)\cap\Dom(\dbars_t)$ converging to $u_r$ on $\Omega_r$ with respect to $W^1_{0,q}(\Omega_r)$.
Multiplying again by our cutoff function gives us $u_{\ell,r}(z)\chi\left(\frac{(r+1)^2-|z|^2}{(r+1)^2-r^2}\right)\in C^k_{0,q}(\bar\Omega)\cap H^k_{0,q}(\Omega,e^{-t|z|^2},X^t) \cap \Dom(\dbars_t)$,
and we can extract a convergent subsequence by taking $r$ and $\ell$ sufficiently large.
\end{proof}

For the next lemma, we need to use special boundary charts.
Let $T_1,\dots, T_{n-1}$ be an orthonormal basis of $(1,0)$ vector fields near $\bd\Omega$ so that $T_j\rho=0$ on $\bd\Omega$. Let $T_n$ be the vector field so that
$T_n$ is orthogonal to $T_1,\dots,T_{n-1}$, $D_\nu := \Rre T_n = \frac{1}{\sqrt 2} \frac{\p}{\p\nu}$ and $T_\nu := \Imm T_n$ is tangential near $\bd\Omega$ and orthogonal to $T_1,\dots,T_{n-1}$.
Let $\bom^1,\dots,\bom^n$ be the dual basis. If $\bd\Omega$
has a uniformly $C^m$ defining function, then $\bom^j$ has coefficients (when expressed in the global coordinates $d\z^1,\dots,d\z^n$) that are uniformly $C^{m-1}$. Therefore,
$\dbar \bom^j$ has coefficients that are uniformly $C^{m-2}$.

In the special boundary chart, a $(0,q)$-form $u$ can be expressed as  $u = \sum_{J\in\I_q} u_J\, \bom^J$. Moreover, $u$ has
\[
\dbar u = \sum_{J\in\I_q}\sum_{k=1}^{n} \bar T_k u_J\, \bom^k\wedge\bom^J + O(u)
\quad\text{and}\quad
\vartheta^t u = -\sum_{I\in\I_{q-1}}\sum_{j=1}^n (T^t_j) u_{jI}\, \bom^I + O(u)
\]
where $u_{jI} = \sum_{J\in\I_q} \ep^{jI}_J u_J$. Note that in the formula for $\vartheta^t u$, the error term is $O(u)$, not $O_t(u)$. This is due to the fact that only the first order component of a weighted derivative satisfies the Leibniz formula, so, for example,
\[
T^t(fg) = g T^t f + f Tg.
\]
The normal derivative $D_\nu$ is defined for $z$ satisfying $\dist(z,\bd\Omega)<\rea(\bd\Omega)$ and has coefficients that are uniformly $C^{m-1}$.

\begin{lem}\label{lem:strengthening of normal deriv lemma from Straube}Let $\Omega\subset\C^n$ be a domain with a uniformly $C^m$ defining function, $m\geq 3$.
Let $u = \sum_{J\in\I_q} u_J\, \bom^J$ be a $(0,q)$-form defined near $\bd\Omega$. Let $J\in\I_q$.
\begin{enumerate}
\item  If $n\notin J$, then we can express $\frac{\p u_J}{\p\nu}$ as a linear combination
of coefficients of $\dbar u$, tangential derivatives of $u$, and $u$. The coefficients of the elements of $\dbar u$ and tangential derivatives of $u$ are uniformly $C^{m-1}$, and the coefficient of
$u$ is uniformly $C^{m-2}$.
\item
If $n\in J$, then the weighted normal derivative $D_\nu^t u_J$ can be expressed as a linear combination
of coefficients of $\vartheta^t u$, weighted tangential derivatives of $u$, and $u$. The coefficients of the elements of $\vartheta u$ and the weighted
tangential derivatives of $u$ are uniformly $C^{m-1}$, and the coefficient of $u$ is uniformly $C^{m-2}$.
\end{enumerate}
\end{lem}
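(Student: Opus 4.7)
The plan is to work in the special boundary chart with frame $(T_1,\ldots,T_{n-1},T_n)$ and dual basis $(\bom^1,\ldots,\bom^n)$ introduced just before the lemma, exploiting the explicit formulas given there for $\dbar u$ and $\vartheta^t u$ in this frame. The key algebraic observation is that $T_n=D_\nu+iT_\nu$ and $\bar T_n=D_\nu-iT_\nu$ with $T_\nu$ tangential, so $D_\nu$ can be recovered from $\bar T_n$ modulo a tangential derivative; likewise $D_\nu^t := D_\nu - tD_\nu|z|^2$ satisfies $T_n^t = D_\nu^t+iT_\nu^t$ with $T_\nu^t$ weighted tangential, so $D_\nu^t$ can be recovered from $T_n^t$ modulo a weighted tangential derivative.

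For part~(1), fix $J\in\I_q$ with $n\notin J$ and expand $\dbar u$ in the $\bom$-basis. The coefficient of $\bom^n\wedge\bom^J$ equals, up to an $O(u)$ term, a signed sum of contributions $\bar T_k u_K$ for which $\{k\}\cup K=\{n\}\cup J$. The index $k=n$ contributes $\bar T_n u_J$, while each $k\in J$ contributes some $\bar T_k u_{K(k)}$ with $k\le n-1$, i.e.\ a purely tangential $(0,1)$-derivative of a component of $u$. Solving for $\bar T_n u_J$ and using $\bar T_n = D_\nu - iT_\nu$ with $T_\nu$ tangential yields
\[
D_\nu u_J = \bigl[\text{coefficient of }\bom^n\wedge\bom^J\text{ in }\dbar u\bigr] + iT_\nu u_J + \sum_{k\in J}\pm\bar T_k u_{K(k)} + O(u),
\]
which is the desired decomposition.

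For part~(2), fix $J = \{n\}\cup I$ with $I\in\I_{q-1}$ and use the formula $\vartheta^t u = -\sum_{I'\in\I_{q-1}}\sum_{j=1}^n T_j^t u_{jI'}\,\bom^{I'} + O(u)$. The coefficient of $\bom^I$ in $\vartheta^t u$ is then $-\sum_{j=1}^n T_j^t u_{jI} + O(u)$; the $j=n$ term is a nonzero multiple of $T_n^t u_J$, and each $j<n$ term is a weighted tangential derivative of a component of $u$. Writing $T_n^t = D_\nu^t + iT_\nu^t$, we isolate
\[
D_\nu^t u_J = \pm\bigl[\text{coefficient of }\bom^I\text{ in }\vartheta^t u\bigr] - iT_\nu^t u_J - \sum_{j<n}\pm T_j^t u_{jI} + O(u).
\]

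The main obstacle, and really the only substantive point beyond the identifications above, is tracking the regularity of the coefficients. The transitions between the global basis $d\z^1,\ldots,d\z^n$ and the special basis $\bom^1,\ldots,\bom^n$ involve a matrix of uniformly $C^{m-1}$ functions, so the coefficients multiplying entries of $\dbar u$, $\vartheta^t u$, and the tangential or weighted tangential derivatives of $u$ are uniformly $C^{m-1}$. The $O(u)$ terms, on the other hand, arise from differentiating the $\bom^j$ (through $\dbar\bom^j$ and commutators of the $T_j$), which drops the regularity by one and yields coefficients that are only uniformly $C^{m-2}$. This matches the regularity claim in both parts, and the combinatorial bookkeeping of signs $\ep^{jI}_J$ is routine.
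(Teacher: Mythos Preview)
Your proposal is correct and follows essentially the same approach as the paper: extract the $\bom^n\wedge\bom^J$ coefficient of $\dbar u$ (respectively the $\bom^I$ coefficient of $\vartheta^t u$ with $I=J\setminus\{n\}$), isolate $\bar T_n u_J$ (respectively $T_n^t u_J$), and then split off the tangential imaginary part $T_\nu$ (respectively $T_\nu^t$) to recover $D_\nu u_J$ (respectively $D_\nu^t u_J$). Your discussion of why the $O(u)$ coefficients drop to $C^{m-2}$ while the others stay $C^{m-1}$ is also on target.
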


\begin{proof} Investigating $\dbar u$, observe that  $k\not\in J$ if and only if $\bom^k\wedge\bom^J\neq 0$. Consequently, if $n\not\in J$, then the $\bom^n\wedge\bom^J$ component of $\dbar u$ is
\[
\Big(\dbar u\Big)_{\bom^n\wedge\bom^J} = \bar T_n u_J + \sum_{\atopp{J'\in\I_q}{J'\neq J}} \sum_{k=1}^n \ep^{kJ'}_{nJ} \bar T_k u_{J'} + O(u).
\]
Since $J'\neq J$, it follows that $k\neq n$ so that $\bar T_k$ is is a tangential vector field. Also, $\bar T_n u_J = D_{\nu}u_J - iT_\nu u_J$.
Note then that if $n\not\in J$, we have shown
\[
D_\nu u_J = \Big(\dbar u\Big)_{\bom^n\wedge\bom^J} - \sum_{\atopp{J'\in\I_q}{J'\neq J}} \sum_{k=1}^{n-1} \ep^{kJ'}_{nJ} \bar T_k u_{J'} + i T_\nu u_J+O(u).
\]
On the other hand when $n\in J$, we use $\vartheta^t u_J$ to control $D_\nu^t u_J$. Specifically, if $n\in J$ and $I = J\setminus\{n\}$, then $\ep^{nI}_J = (-1)^{q-1}$ and the $\bom^I$ component of
$\vartheta^t u$ is
\[
\big( \vartheta^t u \big)_{\bom^I} = -(-1)^{q-1}T^t_n u_J - \sum_{\atopp{J'\in\I_q}{J'\neq J}} \sum_{j=1}^{n-1} \ep^{jI}_{J'} T^t_j u_{J'} + O(u)
\]
Each of the nonzero weighted derivatives $\ep^{jI}_{J'} T^t_j$ are weighted tangential. This means
\[
(-1)^{q-1} D_\nu^t u_J = -\big( \vartheta^t u \big)_{\bom^I} - \sum_{\atopp{J'\in\I_q}{J'\neq J}} \sum_{j=1}^{n-1} \ep^{jI}_{J'} T^t_j u_{J'}-i(-1)^{q-1}T_\nu^t u_J + O(u)
\]
and the proof is complete.
\end{proof}
We would like to remove the dichotomy in Lemma \ref{lem:strengthening of normal deriv lemma from Straube}, namely, that some components are bounded with weighted tangential derivatives
and $\vartheta^t$ and others by unweighted tangential derivatives and $\dbar$.  However, we first record some technical lemmas about commutators of the various derivatives that appear.
\begin{lem}\label{lem:commutator of tangential/normal derivs}Let  $T^\alpha = T_{\alpha_1}\cdots T_{\alpha_\ell}$ be a tangential derivative of order $1 \leq \ell \leq m-1$
with coefficients that are uniformly $C^{\ell_1}$, $\ell \leq \ell_1$. If $X$ is a
first order differential operator with coefficients that are uniformly $C^{\ell_2}(\Om)$, $\ell \leq \ell_2$, then with $\ell_3 = \min\{\ell_1,\ell_2\}$
for every $\beta\subsetneq \alpha$, there exist first order operators $X_\beta$ with coefficients that are uniformly $C^{\ell_3-(\ell-|\beta|)}$ such that
\begin{equation}\label{eqn:commutator}
[T^\alpha ,X] = \sum_{\beta \subsetneq \alpha} T^\beta  X_\beta.
\end{equation}
\end{lem}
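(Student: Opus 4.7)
My plan is to prove the identity by induction on the order $\ell$ of $T^\alpha$, using only the Leibniz-type commutator expansion together with careful bookkeeping of how many derivatives fall on coefficients.

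For the base case $\ell=1$, a direct computation shows that if $T_{\alpha_1}=\sum a_j\p_j$ and $X=\sum b_k\p_k$, then
\[
[T_{\alpha_1},X]=\sum_{j,k}a_j(\p_j b_k)\p_k-\sum_{j,k}b_k(\p_k a_j)\p_j,
\]
which is a first-order operator whose coefficients involve one derivative of the coefficients of $T_{\alpha_1}$ (lying in $C^{\ell_1-1}$) and one derivative of the coefficients of $X$ (lying in $C^{\ell_2-1}$). Taking $\beta=\emptyset$ and $X_\emptyset=[T_{\alpha_1},X]$ gives coefficients in $C^{\ell_3-1}=C^{\ell_3-(\ell-|\beta|)}$, as required.

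For the inductive step, I factor $T^\alpha=T^{\alpha'}T_{\alpha_\ell}$ with $T^{\alpha'}$ tangential of order $\ell-1$ and use the algebraic identity
\[
[T^\alpha,X]=[T^{\alpha'},X]\,T_{\alpha_\ell}+T^{\alpha'}[T_{\alpha_\ell},X].
\]
The inductive hypothesis writes $[T^{\alpha'},X]=\sum_{\beta'\subsetneq\alpha'}T^{\beta'}X'_{\beta'}$ with $X'_{\beta'}$ first order and of class $C^{\ell_3-(\ell-1-|\beta'|)}$. The term $T^{\beta'}X'_{\beta'}T_{\alpha_\ell}$ is not yet in the required form because the first-order piece sits between two tangential blocks, so I commute once more using $X'_{\beta'}T_{\alpha_\ell}=T_{\alpha_\ell}X'_{\beta'}+[X'_{\beta'},T_{\alpha_\ell}]$. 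The first summand contributes $T^{\beta}X_\beta$ with $\beta=\beta'\cup\{\alpha_\ell\}$ and $X_\beta=X'_{\beta'}$, and one checks $|\beta|=|\beta'|+1$ gives the required regularity $C^{\ell_3-(\ell-|\beta|)}$. The second summand contributes $T^{\beta'}[X'_{\beta'},T_{\alpha_\ell}]$, which is $T^\beta X_\beta$ with $\beta=\beta'$; the first-order commutator here loses exactly one degree, landing in $C^{\ell_3-(\ell-|\beta'|)}=C^{\ell_3-(\ell-|\beta|)}$. Finally, the piece $T^{\alpha'}[T_{\alpha_\ell},X]$ is already of the form $T^\beta X_\beta$ with $\beta=\alpha'$ and $X_\beta=[T_{\alpha_\ell},X]\in C^{\ell_3-1}$.

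There is no serious analytic obstacle here; the content of the lemma is purely algebraic and the only thing to watch is the regularity bookkeeping. The key invariant is that every commutation of a first-order operator past a first-order tangential derivative costs exactly one degree of regularity, which is precisely the loss reflected in the exponent $\ell_3-(\ell-|\beta|)$. The condition $\beta\subsetneq\alpha$ is automatic: every term in the expansion arises from at least one commutator, which by the inductive construction records a strict subset of the factors of $\alpha$. The only mild subtlety is to keep track of whether the single derivative falling on a coefficient came from $T_{\alpha_j}$ or from $X$, but since we always take $\ell_3=\min\{\ell_1,\ell_2\}$ and $\ell_3\le\min\{\ell_1,\ell_2\}$, the weakest of the two regularities governs, and the induction closes.
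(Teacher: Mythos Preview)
Your proof is correct, and the regularity bookkeeping closes exactly as you claim.

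The organization differs from the paper's. You argue by induction on $\ell$, peeling off the last factor $T_{\alpha_\ell}$ and using $[T^{\alpha'}T_{\alpha_\ell},X]=[T^{\alpha'},X]T_{\alpha_\ell}+T^{\alpha'}[T_{\alpha_\ell},X]$, then commuting each $X'_{\beta'}$ past $T_{\alpha_\ell}$ once more. The paper instead performs a direct expansion, pushing $X$ through the product $T_{\alpha_1}\cdots T_{\alpha_\ell}$ one factor at a time, which yields the explicit closed formula
\[
X_\beta=\big[\cdots\big[[X,T_{(\alpha\setminus\beta)_1}],T_{(\alpha\setminus\beta)_2}\big],\cdots,T_{(\alpha\setminus\beta)_{\ell-|\beta|}}\big],
\]
an iterated commutator of $X$ with the $\ell-|\beta|$ tangential factors omitted from $\beta$. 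The regularity count $\ell_3-(\ell-|\beta|)$ is then read off directly from the length of this iterated bracket. The advantage of the paper's route is that this explicit description of $X_\beta$ is reused verbatim in the two subsequent lemmas (the weighted and mixed weighted/unweighted analogues), where one needs to track not just regularity but the precise algebraic form of the resulting operators. Your inductive argument is tidier for this lemma in isolation, but it does not hand you the iterated-commutator structure for free.
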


\begin{proof}The proof will follow from the computation that if $T = \sum_{j=1}^{2n} a_j \frac{\p}{\p x_j}$ and $X = \sum_{j=1}^n b_j \frac{\p}{\p x_j}$, then
\begin{equation}\label{eqn:T,X commutator}
[T,X] = \sum_{j,k=1}^{2n} \Big( a_j \frac{\p b_k}{\p x_j} - b_j \frac{\p a_k}{\p x_j}\Big) \frac{\p}{\p x_k}.
\end{equation}
Consequently, $[T,X]$ has coefficients that are uniform in $C^{\ell_3-1}$.

Let $T^\alpha = T_{\alpha_1}\cdots T_{\alpha_\ell}$. Then $[T^\alpha, X] = T^\alpha X - X T^\alpha$ and expanding the commutator in more detail, we observe
\begin{align}
&T^\alpha X - X T^\alpha = T^\alpha X - X T_{\alpha_1}\cdots T_{\alpha_\ell} = T^\alpha X - T_{\alpha_1} X T_{\alpha_2}\cdots T_{\alpha_\ell} + [X,T_{\alpha_1}] T_{\alpha_2} \cdots T_{\alpha_\ell} \nn\\
&=T^\alpha X - \big(T_{\alpha_1}T_{\alpha_2} X + T_{\alpha_1}[X,T_{\alpha_2}] + T_{\alpha_2}[X,T_{\alpha_1}] + \big[[X,T_{\alpha_1}],T_{\alpha_2}\big] \big) T_{\alpha_3}\cdots T_{\alpha_\ell}\nn \\
&= \sum_{\beta \subset \alpha}  T_{\beta_1} T_{\beta_2} \cdots T_{\beta_{|\beta|}} X_\beta \label{eqn:[T^alpha,X]}
\end{align}
where
\begin{equation}\label{eqn:X_beta}
X_\beta =  \big[ [[\cdots[[X,T_{(\alpha\setminus\beta)_1}],T_{(\alpha\setminus\beta)_2}], \cdots ],T_{(\alpha\setminus\beta)_{\ell-k-1}}],T_{(\alpha\setminus\beta)_{\ell-|\beta|}}\big]
\end{equation}
is an interated commutator of $X$ with $\ell-|\beta|$ tangential derivatives from $T^\alpha$ not included in $T^\beta$. We know that a commutator of two vectors fields with coefficients that are uniformly $C^k$
produces a vector field with coefficients that are uniformly $C^{k-1}$. Since the iterated commutator defining $X_\beta$ involves commuting $X$ with $\ell-|\beta|$ vector fields, $X_\beta$ is a vector field with uniformly $C^{\ell_3-(\ell-|\beta|)}$ coefficients.
\end{proof}

Observe that $[X_j^t,X_k^t]=\left(\left[\frac{\partial}{\partial x_k},\frac{\partial}{\partial x_j}\right]\right)^*_t=0$, so
\begin{equation}
\label{eqn: weighted deriv commutator}
[aX_j^t, b X_k^t] = a \frac{\p b}{\p x_j}X_k^t  - b\frac{\p a}{\p x_k} X_j^t.
\end{equation}

Using \eqref{eqn: weighted deriv commutator} to replace \eqref{eqn:T,X commutator}, we can
repeat the argument of Lemma \ref{lem:commutator of tangential/normal derivs} to prove the following, weighted derivative version.
\begin{lem}\label{lem:commutator of weighted tangential/normal derivs}Let  $T^\alpha = T_{\alpha_1}\cdots T_{\alpha_\ell}$ be a tangential derivative of order $1 \leq \ell \leq m-1$
with coefficients that are uniformly $C^{\ell_1}$, $\ell \leq \ell_1$. If $X$ is a
first order differential operator with coefficients that are uniformly $C^{\ell_2}(\Om)$, $\ell \leq \ell_2$, then with $\ell_3 = \min\{\ell_1,\ell_2\}$
for every $\beta\subsetneq \alpha$, there exists a first order weighted derivative $X_\beta^t$ with coefficients that are uniformly $C^{\ell_3-(\ell-|\beta|)}$ so that
\begin{equation}\label{eqn:[T^alpha^t,X^t]}
[(T^\alpha)^t ,X^t] = \sum_{\beta \subsetneq \alpha} (T^\beta)^t  X_\beta^t.
\end{equation}
\end{lem}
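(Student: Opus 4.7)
The plan is to follow the proof of Lemma \ref{lem:commutator of tangential/normal derivs} almost verbatim, substituting the weighted commutator identity \eqref{eqn: weighted deriv commutator} wherever \eqref{eqn:T,X commutator} was used in the unweighted case. The crucial structural observation is that \eqref{eqn: weighted deriv commutator} expresses the commutator of two first-order weighted operators as another first-order weighted operator, with \emph{no} zeroth-order residual term generated by the weight. Because the algebraic form of \eqref{eqn: weighted deriv commutator} exactly parallels \eqref{eqn:T,X commutator}, the entire expansion \eqref{eqn:[T^alpha,X]}--\eqref{eqn:X_beta} can be rerun in the weighted setting with only cosmetic changes.

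More concretely, I would first handle the base case $\ell=1$: write $T^t_{\alpha_j} = \sum_k a_{j,k} X_k^t$ and $X^t = \sum_k b_k X_k^t$ in the basis of weighted real derivatives, and apply \eqref{eqn: weighted deriv commutator} term by term. Bilinearity gives $[T^t_{\alpha_j}, X^t]$ as a finite sum of operators of the form $a_{j,k}\,\partial b_l/\partial x_k\, X_l^t - b_l\,\partial a_{j,k}/\partial x_l\, X_k^t$, which is a first-order weighted differential operator with coefficients uniformly $C^{\ell_3-1}$. (No $X_k^t$ can act on the weight $e^{-t|z|^2}$ to produce a dangerous unbounded coefficient, precisely because the weighted derivatives commute.)

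For general $\ell$, I would iterate the base case exactly as in \eqref{eqn:[T^alpha,X]}: telescope $(T^\alpha)^t X^t - X^t (T^\alpha)^t$ by peeling off factors of $T^t_{\alpha_j}$ one at a time and moving them past $X^t$ using the base-case commutator. The result is the claimed decomposition
\[
[(T^\alpha)^t, X^t] = \sum_{\beta \subsetneq \alpha} (T^\beta)^t X_\beta^t,
\]
where each $X_\beta^t$ is an iterated commutator of $X^t$ with the $\ell - |\beta|$ factors of $(T^\alpha)^t$ not retained in $(T^\beta)^t$, in direct analogy with \eqref{eqn:X_beta}. Since each individual application of \eqref{eqn: weighted deriv commutator} costs exactly one derivative of the coefficients and produces a first-order weighted operator, after $\ell - |\beta|$ iterated commutators $X_\beta^t$ is a first-order weighted operator with coefficients uniformly $C^{\ell_3 - (\ell - |\beta|)}$, matching the claim. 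Tangentiality of the surviving factor $(T^\beta)^t$ is automatic since it is a composition of the original weighted tangential operators $T^t_{\alpha_j}$.

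I do not expect a genuine obstacle here: the argument is formal once \eqref{eqn: weighted deriv commutator} is in hand. The only point requiring verification is that no zeroth-order term is ever generated, so that the inductive hypothesis (\emph{first-order} weighted operator on the right-hand side) is preserved; this is built into \eqref{eqn: weighted deriv commutator}. The regularity bookkeeping is identical to the unweighted case and requires no new ideas.
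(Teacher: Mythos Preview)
Your proposal is correct and matches the paper's approach exactly: the paper simply states that one repeats the argument of Lemma~\ref{lem:commutator of tangential/normal derivs}, replacing \eqref{eqn:T,X commutator} by \eqref{eqn: weighted deriv commutator}. Your write-up is in fact more detailed than what the paper provides, but the method is identical.
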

\hfill\qed

Finally, we investigate the situation when $X$ is not weighted but the tangential operators are weighted. The relevant commutator is $\left[X_j^t,\frac{\partial}{\partial x_k}\right]=\left[\frac{\partial}{\partial x_j}-2tx_j,\frac{\partial}{\partial x_k}\right]=2t\delta_{jk}$, so
\begin{equation}
\label{eqn:T^t, X commutator}
[aX_j^t, b \frac{\p}{\p x_k}] =  a \frac{\p b}{\p x_j}\frac{\p}{\p x_k}  - b\frac{\p a}{\p x_k} X_j^t +2abt\delta_{jk}.
\end{equation}

\begin{lem}\label{lem:commutator of weighted tangential/ unweighted normal derivs}Let  $T^\alpha = T_{\alpha_1}\cdots T_{\alpha_\ell}$ be a tangential derivative of order $1 \leq \ell \leq m-1$
with coefficients that are uniformly $C^{\ell_1}$, $\ell \leq \ell_1$. If $X$ is a
first order differential operator with coefficients that are uniformly $C^{\ell_2}(\Om)$, $\ell \leq \ell_2$, then with $\ell_3 = \min\{\ell_1,\ell_2\}$
for every $\beta\subsetneq \alpha$, there exist a first order weighted derivative $X_\beta^t$, a vector field $X_\beta'$, and a function $c_\beta$ so that
\begin{equation}\label{eqn:weighted commutator}
[(T^\alpha)^t ,X] = \sum_{\beta \subsetneq \alpha} (T^\beta)^t  \big(X_\beta^t +X_\beta' + tc_\beta\big).
\end{equation}
Moreover, $X_\beta^t$ and $X_\beta'$ have coefficients that are uniformly $C^{\ell_3 - (\ell-|\beta|)}$ and $c_\beta$ is a uniformly $C^{\ell_3-(\ell-|\beta|)+1}$ function.
\end{lem}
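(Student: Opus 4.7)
The plan is to mirror the inductive argument used in the proof of Lemma \ref{lem:commutator of tangential/normal derivs}, but with \eqref{eqn:T^t, X commutator} as the base case in place of \eqref{eqn:T,X commutator}. The induction is on $\ell = |\alpha|$. For $\ell = 1$, formula \eqref{eqn:T^t, X commutator} already decomposes $[aX_j^t, b\,\partial/\partial x_k]$ into exactly the three advertised kinds of terms (corresponding to $\beta = \emptyset$): an unweighted vector field $a(\partial_{x_j} b)\,\partial/\partial x_k$ contributing to $X_\emptyset'$, a weighted derivative $-b(\partial_{x_k} a)X_j^t$ contributing to $X_\emptyset^t$, and a scalar multiplication $2abt\delta_{jk}$ contributing to $tc_\emptyset$. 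The regularity at this step is consistent with the claim because the first two involve one derivative of a uniformly $C^{\ell_3}$ coefficient while $c_\emptyset$ involves no differentiation.

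For the inductive step I write $(T^\alpha)^t = T^t_{\alpha_1}(T^{\alpha'})^t$ with $\alpha' = (\alpha_2,\ldots,\alpha_\ell)$ and split the commutator via the Leibniz rule for commutators:
\[
[(T^\alpha)^t, X] = T^t_{\alpha_1}[(T^{\alpha'})^t, X] + [T^t_{\alpha_1}, X](T^{\alpha'})^t.
\]
The first summand is already in the required form once the inductive hypothesis is applied to $\alpha'$: prepending $T^t_{\alpha_1}$ turns each $(T^{\beta'})^t$ into $(T^\beta)^t$ with $\beta = (\alpha_1,\beta') \subsetneq \alpha$. For the second summand, the base case gives $[T^t_{\alpha_1}, X] = Y^t + Y' + tc$, and I then move $(T^{\alpha'})^t$ to the left past each of $Y^t$, $Y'$, and $tc$. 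The resulting commutator $[(T^{\alpha'})^t, Y^t]$ is handled by Lemma \ref{lem:commutator of weighted tangential/normal derivs} (producing weighted derivative contributions), $[(T^{\alpha'})^t, Y']$ is handled by the inductive hypothesis (producing all three types), and $[(T^{\alpha'})^t, c]$ is computed by iterating the elementary identity $[T^t, f] = Tf$ for a function $f$, which preserves the overall factor of $t$ and yields only multiplication-type contributions of the form $(T^{\beta'})^t \cdot (t \cdot \text{derivative of the coefficient})$.

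The regularity bookkeeping proceeds exactly as in Lemma \ref{lem:commutator of weighted tangential/normal derivs}: each commutation costs one derivative of smoothness, so after the $\ell - |\beta|$ commutations producing the $\beta$-indexed term, the vector-field-valued pieces $X_\beta^t$ and $X_\beta'$ end up with uniformly $C^{\ell_3 - (\ell-|\beta|)}$ coefficients. The scalar $c_\beta$ retains one extra degree of smoothness because its initial production in \eqref{eqn:T^t, X commutator} is the term $2abt\delta_{jk}$, which consumes no derivatives of the coefficients; only the subsequent $\ell - |\beta|$ tangential commutations differentiate it, giving $c_\beta \in C^{\ell_3 - (\ell-|\beta|)+1}$. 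The main obstacle is the bookkeeping itself: under further commutation each of the three categories can spawn all three, so one must verify that Lemma \ref{lem:commutator of weighted tangential/normal derivs}, the inductive hypothesis, and the function-commutator identity $[T^t,f]=Tf$ jointly exhaust every kind of term that appears. Once this closure is confirmed, the induction runs mechanically.
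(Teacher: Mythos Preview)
Your proposal is correct and follows essentially the same approach as the paper. The only organizational difference is that the paper first invokes the formal iterated-commutator expansion from Lemma~\ref{lem:commutator of tangential/normal derivs} to write $[(T^\alpha)^t,X]$ as $\sum_{\beta\subsetneq\alpha}(T^\beta)^t X_\beta''$ and then analyzes each iterated commutator $X_\beta''$ by induction on its length, whereas you run a direct induction on $|\alpha|$ via the Leibniz rule $[(T^\alpha)^t,X]=T^t_{\alpha_1}[(T^{\alpha'})^t,X]+[T^t_{\alpha_1},X](T^{\alpha'})^t$; both routes rest on the same three ingredients (the base case \eqref{eqn:T^t, X commutator}, Lemma~\ref{lem:commutator of weighted tangential/normal derivs} for the weighted--weighted piece, and $[T^t,f]=Tf$ for the scalar piece) and yield identical regularity counts.
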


\begin{proof} The computation leading to (\ref{eqn:[T^alpha,X]}) and (\ref{eqn:X_beta}) was formal, so in this case, we have
\[
[X, (T^\alpha)^t] = \sum_{\beta \subset \alpha}  (T_{\beta_1})^t (T_{\beta_2})^t \cdots (T_{\beta_k})^t X_\beta''
\]
where
\[
X_\beta'' =  \big[ [[\cdots[[X,T_{(\alpha\setminus\beta)_1}^t],T_{(\alpha\setminus\beta)_2}^t], \cdots ],T_{(\alpha\setminus\beta)_{\ell-k-1}}^t],T_{(\alpha\setminus\beta)_{\ell-|\beta|}}^t\big]
\]
We need to understand the terms in $X_\beta''$, and the iterated commutator of length $\ell-|\beta|$. We know that there exist $Y_1$, $Y_1'$, and $c$ so that
\[
[X,T_{(\alpha\setminus\beta)_1}^t] =  Y_1^t +  Y_1' + tc
\]
where $Y_1^t$ and $Y_1'$ have coefficients that are uniformly $C^{\ell_3-1}$ and $c$ is uniformly $C^{\ell_3}$. Iterated commutators are linear in each of the components, so we can treat each piece
separately. The iterated commutator piece with $Y^t_1$ is handled with a repeated use of (\ref{eqn: weighted deriv commutator}) to produce a weighted derivative with coefficients that are uniformly
$C^{(\ell_3-1)-(\ell-|\beta|-1)}$, that is, uniformly $C^{\ell_3-(\ell-|\beta|)}$. The piece with $tc$ is also relatively straight forward to handle. Since $[c\cdot , Y^t] = -(Yc)\cdot$, we see that the commutator of
a function that is uniformly $C^{\ell'}$ with a weighted vector with uniformly $C^{\ell''}$ coefficients produces a function that is uniformly $C^{\min\{\ell',\ell''\}-1}$, hence the  iterated commutator of
length $\ell-|\beta|-1$ involving $tc$ produces a function $t c_1$ where $c_1$ is uniformly $\ell_3 - (\ell-|\beta|-1)$.

Thus, it remains to handle
\[
\big[ [[\cdots[[Y_1',T_{(\alpha\setminus\beta)_2}^t], \cdots ],T_{(\alpha\setminus\beta)_{\ell-k-1}}^t],T_{(\alpha\setminus\beta)_{\ell-|\beta|}}^t]\big],
\]
an iterated commutator of length $\ell-|\beta|-1$ of a vector field with uniformly $C^{\ell_3-1}$ coefficients with weighted tangential vector fields with uniformly $C^{\ell_2}$ coefficients. Repeating the
argument we just completed (i.e., induction on the length of the iterated commutator) shows that we can write this commutator in the form
\[
Y_2^t + Y_2' + t c_2
\]
where $Y_2^t$ is a weighted derivative with coefficients that are uniformly $C^{\ell_3-1 - (\ell-|\beta|-1)}$, i.e., uniformly $C^{\ell_3 - (\ell-|\beta|)}$. $Y_2'$ is a vector field, also with coefficients
that are uniformly $C^{\ell_3-(\ell-|\beta|)}$ and $c$ is a uniformly $C^{\ell_3-(\ell-|\beta|)+1}$ function.
\end{proof}

A corollary of the previous three lemmas is that if $\alpha$ is a multindex of length $k$ and $D^\alpha = D_{\alpha_1}\cdots D_{\alpha_k}$ where each $D_{\alpha_j}$ is either a vector field or weighted derivative that is tangential, respectively weighted tangential,
near $\bd\Omega$ with uniformly
$C^\ell$ coefficients, then for a vector field or weighted derivative $X$ with coefficients that are uniformly $C^{\ell_1}$, $k \leq\min\{ \ell,\ell_1\}$,
\[
[X,D^\alpha] = \sum_{\beta \subsetneq \alpha} D^\beta(X_\beta^t + X_\beta' + t c_\beta)
\]
where $X_\beta^t$ and $X_\beta'$ have uniformly $C^{\min\{\ell,\ell_1\}-(k-|\beta|)}$ coefficents and $c_\beta$ has uniformly\\ $C^{\min\{\ell,\ell_1\}-(k-|\beta|)+1}$ coefficients.

To show that $N^\ep_{q,t}$ is bounded in $H^k_{0,q}(\Omega, e^{-t|z|^2},X^t)$ with a constant independent of $\ep>0$, we need to
bound
\[
\| N^\ep_{q,t} f\|_{t,k,\Omega}^2 = \sum_{|\alpha|\leq k} \|(X^t)^\alpha N^\ep_{q,t} f\|_t^2.
\]	
The approach is to show that normal derivatives are controlled by $\dbar$, $\vartheta$, and tangential derivatives and therefore we only need to bound tangential derivatives to control the full Sobolev norm. This is accomplished in the next proposition.

In a neighborhood of each boundary point, we can define $\nabla_T$ to be the vector with components $(\Rre T_1,\Imm T_1, \dots, \Rre T_{n-1},\Imm T_{n-1}, T_\nu)$.  By a partition of unity, we can extend $\nabla_T$ to a uniform neighborhood of the boundary.  If we let $\nabla_T=\nabla_X$ on an interior set that is uniformly bounded away from the boundary, then we have a global gradient which differs from
$\nabla_X = (X_1,\dots,X_{2n})$ in that $\nabla_T$ contains only derivatives in the tangential directions on a uniform neighborhood of the boundary.  The following proposition extends Lemma \ref{lem:strengthening of normal deriv lemma from Straube} to higher order derivatives by showing that any $k$ derivatives of $v$ can be estimated in terms of $k-1$ derivatives of $\dbar v$, $k-1$ derivatives of $\vartheta ^t v$, $k$ weighted tangential derivatives of $v$, and lower order derivatives.
\begin{prop}\label{prop:controlling normal derivatives}
Let $v\in H^k_{0,q}(\Om,e^{-t|z|^2},X^t)$. Then there exist constants $C_k, C_{t,k}>0$ so that
\begin{equation}\label{eqn:k norm of v by dbar, dbars, tangent}
\|v\|_{t,k,\Om}^2 \leq C_k\Big( \|\dbar v\|_{t,k-1,\Om}^2 + \|\vartheta^t v\|_{t,k-1,\Om}^2 + \|(\nabla_T^t)^k v\|_t^2\Big) + C_{t,k}\|v\|_{t,k-1,\Om}^2.
\end{equation}
\end{prop}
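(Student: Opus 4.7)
The idea is to bound each top-order derivative $(X^t)^\alpha v$ with $|\alpha|=k$ (lower orders fall automatically into $\|v\|_{t,k-1,\Om}^2$) by inducting on the number of normal derivatives appearing in the operator, and applying Lemma \ref{lem:strengthening of normal deriv lemma from Straube} to trade each normal derivative for a component of $\dbar v$, of $\vartheta^t v$, or of a tangential derivative of $v$. By a uniformly locally finite partition of unity we may localize $v$: on interior patches uniformly far from $\bd\Om$ we have $\nabla_T^t = \nabla_X^t$ by construction so there is nothing to prove. In each special boundary chart we write $v = \sum_{J\in\I_q} v_J\, \bom^J$ and express each $X_j^t$ as a uniformly $C^{m-1}$ linear combination of the weighted tangential fields $T_1^t,\dots,T_{n-1}^t,T_\nu^t$ and either the weighted $D_\nu^t$ or the unweighted $D_\nu$ (which differ by a multiple of $t$ times a coordinate function, a $t$-dependent lower-order correction).

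Using the commutator Lemmas \ref{lem:commutator of tangential/normal derivs}--\ref{lem:commutator of weighted tangential/ unweighted normal derivs} and the corollary immediately following them, any product of $k$ such weighted first-order operators acting on $v_J$ can be rearranged into a finite sum of canonical terms $(T^\beta)^t (D_\nu^{\#})^\gamma v_J$ with $|\beta| + \gamma = k$, where $D_\nu^{\#}$ is chosen to match the component (weighted $D_\nu^t$ when $n\in J$, unweighted $D_\nu$ when $n\notin J$), modulo operators of total order at most $k - 1$ with (possibly $t$-dependent) uniformly $C^0$ coefficients; those remainders are absorbed by $C_{t,k}\|v\|_{t,k-1,\Om}^2$. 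Now induct on $\gamma$. The base case $\gamma = 0$ gives a weighted tangential derivative of order at most $k$ applied to $v_J$, hence bounded by $\|(\nabla_T^t)^k v\|_t^2$ plus lower-order contributions. For $\gamma \geq 1$, apply Lemma \ref{lem:strengthening of normal deriv lemma from Straube} to the innermost $D_\nu^{\#} v_J$: when $n\notin J$, replace $D_\nu v_J$ by a uniformly $C^{m-1}$ combination of coefficients of $\dbar v$ and tangential derivatives of $v$, plus a uniformly $C^{m-2}$ multiple of $v$; when $n\in J$, make the analogous replacement using $\vartheta^t v$ and weighted tangential derivatives. After once more invoking the commutator lemmas to reorder, the surviving $(T^\beta)^t (D_\nu^{\#})^{\gamma-1}$ has order $k - 1$, so acting on a $\dbar v$ or $\vartheta^t v$ coefficient contributes to $\|\dbar v\|_{t,k-1,\Om}^2 + \|\vartheta^t v\|_{t,k-1,\Om}^2$; acting on a tangential derivative of $v$ yields an order-$k$ operator with only $\gamma - 1$ normal derivatives, closing the induction; acting on the $v$-term drops the total order to $k - 1$ and enters $C_{t,k}\|v\|_{t,k-1,\Om}^2$.

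The main obstacle is the dichotomy in Lemma \ref{lem:strengthening of normal deriv lemma from Straube}: components with $n\notin J$ must be treated with unweighted tangential derivatives and $\dbar$, while those with $n\in J$ require weighted tangential derivatives and $\vartheta^t$. Lemma \ref{lem:commutator of weighted tangential/ unweighted normal derivs} is precisely the bridge between these two regimes, and the price of the conversion --- factors of $t$ arising from $D_\nu - D_\nu^t$ and from the commutator identity \eqref{eqn:T^t, X commutator} --- is exactly what forces $C_{t,k}$ to depend on $t$ while the coefficient $C_k$ of the top-order terms remains independent of $t$. The hypothesis $k \leq m - 2$ ensures that the coefficient loss of one derivative per commutator step, combined with the single $C^{m-2}$ loss in Lemma \ref{lem:strengthening of normal deriv lemma from Straube}(i)--(ii), keeps all coefficients appearing in the argument uniformly bounded. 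What remains is straightforward but tedious bookkeeping to verify that every error term collects into one of the four target expressions on the right-hand side of \eqref{eqn:k norm of v by dbar, dbars, tangent}.
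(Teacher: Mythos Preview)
Your overall strategy---induction on the number of normal derivatives, with Lemma~\ref{lem:strengthening of normal deriv lemma from Straube} supplying the trade at each step---matches the paper's, but the paper organizes the argument differently and for a reason that exposes a gap in yours. The paper first isolates a clean $k=1$ estimate (equation~\eqref{eqn:normal derivatives benign}): it splits $v=v_1+v_2$ into the $n\in J$ and $n\notin J$ pieces, applies Lemma~\ref{lem:strengthening of normal deriv lemma from Straube} to each, and then uses two facts you do not invoke---that $\dbar$ acts \emph{tangentially} on $v_1$ and $\vartheta^t$ acts tangentially on $v_2$ (so $\|\dbar v_2\|_t\leq C\|\dbar v\|_t+C\|\nabla_T v_1\|_t$, etc.), and Proposition~3.5 of \cite{HaRa14} comparing $\|\nabla v\|_t$ with $\|\nabla_X^t v\|_t$---to close the dichotomy once and for all. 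The higher-$k$ step then substitutes $w=(D_\nu^t)^{\ell-1}(T^t)^\alpha v$ into \eqref{eqn:normal derivatives benign} and commutes $\dbar$, $\vartheta^t$, $\nabla_T^t$ past this purely \emph{weighted} operator; the commutators (via Lemma~\ref{lem:commutator of weighted tangential/ unweighted normal derivs}) have bounded $c_\beta$ and land in $C_{t,k}\|v\|_{t,k-1}^2$.

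Your route instead chooses $D_\nu^{\#}$ componentwise and handles the dichotomy inside the induction. The trouble is the sentence ``which differ by a multiple of $t$ times a coordinate function, a $t$-dependent lower-order correction'' and the claim that rearrangement remainders are order $\leq k-1$ with ``uniformly $C^0$ coefficients.'' On an unbounded domain $D_\nu-D_\nu^t=2t(\nu\cdot x)$ has an \emph{unbounded} coefficient; it is not a zeroth-order operator with bounded coefficient, and it is not lower order than $D_\nu^t$ itself. (Multiplication by $t x_j$ does map $H^s\to H^{s-1}$ with $t$-independent leading constant, because $2tx_j=\partial_{x_j}-X_j^t$ together with \cite[Proposition~3.5]{HaRa14}, but this costs a full derivative, not merely a power of $t$; you never appeal to this.) Worse, after one application of Lemma~\ref{lem:strengthening of normal deriv lemma from Straube} the components $v_{J'}$ appearing in the tangential error terms have the \emph{opposite} $n$-membership from $v_J$, so your choice of $D_\nu^{\#}$ no longer matches and a conversion is forced at every inductive step; each such conversion produces top-order terms your bookkeeping does not absorb. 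The paper's preliminary estimate \eqref{eqn:normal derivatives benign} is precisely the device that packages this conversion at level $k=1$, so that the higher-order induction lives entirely in the weighted calculus.
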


\begin{proof}We first assume that in some uniform neighborhood of the boundary, our $k$th order derivative takes the form $(D_\nu^t)^\ell (T^t)^\alpha$ where $|\alpha|+\ell =k$.  Note that the ordering of the normal and tangential derivatives is irrelevant, since the commutators are of order at most $k-1$ and therefore bounded by \eqref{eqn:k norm of v by dbar, dbars, tangent}.  If $\ell=0$, then the result is trivial, so we will proceed by induction on $\ell$.

We next investigate the $\ell=1$, $\alpha=0$ case. Let $v$ be a $(0,q)$-form in $H^2_{0,q}(\Om, e^{-t|z|^2},X^t)\cap\Dom(\dbarst)$.
 We collect some estimates before we carefully write down the estimate of the normal direction.
Since $T_j$ is tangential,
\[
\|T_jv\|_t^2 = \|T_j^* v\|_t^2 + \big([T_j^*,T_j]v,v\big)_t,
\]
and it follows from \eqref{eqn:T^t, X commutator} and the fact that the commutator of tangential derivatives is tangential that
if $T_j$ has uniformly $C^k$ coefficients, then
\[
\|T_j v\|_t^2 = \|T_j^t v\|_t^2 + (c_j \cdot \nabla_T v,v)_t + (c_j'\cdot \nabla_T^t v, v)_t + C_t\|v\|_t^2
\]
and
\[
\|T_j^t v\|_t^2 = \|T_j v\|_t^2 + (d_j \cdot \nabla_T v,v)_t + (d_j'\cdot \nabla_T^t v, v)_t + C_t\|v\|_t^2
\]
where $c_j,c_j',d_j,d_j'$ are uniformly $C^{k-1}$ if $T_j$ has coefficients that are uniformly $C_k$. This means using a small constant/large constant argument and
absorbing terms, that there exists $C_{t}>0$
so that
\begin{equation}
\label{eq:tangential_derivatives_comparable_with_or_without_weight}
\|\nabla_T v\|_t^2 \leq 2\|\nabla_T^t v\|_t^2  + C_{t}\|v\|_t^2
\end{equation}
and
\[
\|\nabla_T^t v\|_t^2 \leq 2\|\nabla_T v\|_t^2  + C_{t}\|v\|_t^2.
\]

Write $v = v_1+ v_2$ where
\[
v_1 =  \sum_{\atopp{J\in\I_q}{n\in J}} v_J\,\bom^J
\qquad\text{and}\qquad
v_2 = \sum_{\atopp{J\in\I_q}{n\notin J}} v_J\, \bom^J,
\]
in suitable local coordinates near each boundary point.  In the interior, we let $v_1=v$ and $v_2=0$.  Turning to the normal derivatives themselves, we are now able to use Lemma \ref{lem:strengthening of normal deriv lemma from Straube} and establish
\begin{align*}
\| D_\nu v\|_t^2 + \|D_\nu^t v\|_t^2
&\leq 2\left(\| D_\nu v_1\|_t^2 + \|D_\nu^t v_1\|_t^2 + \| D_\nu v_2\|_t^2 + \|D_\nu^t v_2\|_t^2\right) \\
&\leq C(\|\vartheta^t v_1\|_t^2 + \|\nabla_T^t v_1\|_t^2 + \|\dbar v_2\|_t^2 + \|\nabla_T v_2\|_t^2 + \|v\|_t^2)\\
& \hspace{1in}+  \| D_\nu v_1\|_t^2 +  \| D_\nu^t v_2\|_t^2.
\end{align*}
Note that $\dbar$ acts tangentially on $v_1$ and $\vartheta^t$ acts tangentially on $v_2$, so we have
\begin{align*}
  \|\dbar v_2\|_t^2&\leq C(\|\dbar v\|_t^2 + \|\nabla_T v_1\|_t^2),\\
  \|\vartheta^t v_1\|_t^2&\leq C(\|\vartheta^t v\|_t^2 + \|\nabla_T^t v_2\|_t^2).
\end{align*}
Therefore,
\begin{align*}
\| D_\nu v\|_t^2 + \|D_\nu^t v\|_t^2
&\leq C\left(\|\vartheta^t v\|_t^2 + \|\nabla_T^t v_1\|_t^2 + \|\dbar v\|_t^2 + \|\nabla_T v_2\|_t^2\right) + C_t\|v\|_t^2\\
& \hspace{1in}+C\left( \|\nabla_T v_1\|_t^2 +  \| D_\nu v_1\|_t^2 + \|\nabla_T^t v_2\|_t^2 +  \| D_\nu^t v_2\|_t^2\right).
\end{align*}
By \cite[Proposition 3.5]{HaRa14},
\begin{align*}
  \|\nabla v_1\|_t^2&\leq C\|\nabla_X^t v_1\|_t^2 + C_t\|v_1\|_t^2,\\
  \|\nabla_X^t v_2\|_t^2&\leq C\|\nabla v_2\|_t^2 + C_t\|v_2\|_t^2.\\
\end{align*}
Therefore,
\begin{align*}
\| D_\nu v\|_t^2 + \|D_\nu^t v\|_t^2
&\leq C\left(\|\vartheta^t v\|_t^2 + \|\nabla_T^t v_1\|_t^2 + \|\dbar v\|_t^2 + \|\nabla_T v_2\|_t^2\right) + C_t\|v\|_t^2\\
& \hspace{1in}+C\left( \|\nabla_T^t v_1\|_t^2 +  \| D_\nu^t v_1\|_t^2 + \|\nabla_T v_2\|_t^2 +  \| D_\nu v_2\|_t^2\right).
\end{align*}
Using Lemma \ref{lem:strengthening of normal deriv lemma from Straube} again together with \eqref{eq:tangential_derivatives_comparable_with_or_without_weight}, we obtain
\[
\| D_\nu v\|_t^2 + \|D_\nu^t v\|_t^2
\leq C\Big(\|\dbar v\|_t^2 + \|\vartheta^t v\|_t^2  + \|\nabla_T^t v\|_t^2 \Big)  + C_{t} \|v\|_t^2.
\]

We have shown that there exist constants $C, C_t>0$ so that for every $v\in H^2_{0,q}(\Om, e^{-t|z|^2},X)$
\begin{equation}\label{eqn:normal derivatives benign}
\| D_\nu v\|_t^2 + \|D_\nu^t v\|_t^2
\leq C\Big(\|\dbar v\|_t^2 + \|\vartheta^t v\|_t^2  + \|\nabla_T^t v\|_t^2 \Big)  + C_{t} \|v\|_t^2.
\end{equation}
This proves the $k=1$ case.  For $k>1$, \eqref{eqn:normal derivatives benign} implies
\begin{align*}
&\| D_\nu (D_\nu^t)^{\ell-1} (T^t)^\alpha  h\|_t^2 + \|(D_\nu^t)^\ell (T^t)^\alpha h\|_t^2 \\
&\leq C\Big(\|\dbar (D_\nu^t)^{\ell-1} (T^t)^\alpha h \|_t^2 + \|\vartheta^t (D_\nu^t)^{\ell-1} (T^t)^\alpha h\|_t^2  + \|\nabla_T^t  (D_\nu^t)^{\ell-1} (T^t)^\alpha h\|_t^2 \Big)\\
  &\hspace{1in}+ C_{t} \|(D_\nu^t)^{\ell-1} (T^t)^\alpha h\|_t^2 \\
&\leq C\Big(\| (D_\nu^t)^{\ell-1}(T^t)^\alpha\dbar h \|_t^2 + \|(D_\nu^t)^{\ell-1} (T^t)^\alpha \vartheta^t h\|_t^2  + \|  (D_\nu^t)^{\ell-1}\nabla_T^t (T^t)^\alpha  h\|_t^2 \Big)  + C_{t,\ep} \|h\|_{t,k-1,\Om}^2 \\
&\leq C\Big(\| \dbar h \|_{t,k-1,\Om}^2 + \|\vartheta^t h\|_{t,k-1,\Om}^2  + \| (D_\nu^t)^{\ell-1} \nabla_T^t (T^t)^\alpha  h\|_t^2 \Big)  + C_{t,\ep} \|h\|_{t,k-1,\Om}^2.
\end{align*}
The third term can be estimated by our induction hypothesis on $\ell$, and we are done.
\end{proof}

Using Proposition \ref{prop:controlling normal derivatives}, normal derivatives are controlled by tangential derivatives and we see that to control $\|N^\ep_{q,t} f\|_{t,k,\Omega}^2$, it suffices to bound
$\|(T^t)^\alpha N^\ep_{q,t} f\|_{t,k,\Omega}$ where $|\alpha|=k$, $(T^t)^\alpha = T^t_{\alpha_1}\cdots T^t_{\alpha_k}$, and each $T^t_{\alpha_j}$ is tangential near $\bd\Omega$.  To do so, we must generalize \cite[Lemma 4.2]{HaRa15}. The issue is that the weighted terms are no longer benign in the sense that they cannot be treated separately as a lower order term from the first order part. As such, we investigate what the appropriate terms are
that we need for control of the $k$th derivatives.  Choose operators $D_1,\dots, D_{4n}$ so that near $\bd\Omega$, $\{D_j: 1 \leq j \leq 2n\}$ are tangential and span the tangential directions and away from the boundary span $\C^n$. Let
$\{D_j: 2n+1 \leq j \leq 4n\}$ be the weighted versions of first $2n$ vectors.  For example, when $1\leq j\leq 2n$ we may choose $D_j=\frac{\partial}{\partial x_j}-\frac{\partial\tilde\delta}{\partial x_j}\nabla\tilde\delta\cdot\nabla$ on a uniform neighborhood of the boundary and $D_j=\frac{\partial}{\partial x_j}$ away from this uniform neighborhood, with a suitable transition in between.  Thus, either set can be used to obtain the Sobolev norms of a form in $\Dom(\dbarst)$, as long as we are willing to pay a price
of multiplication by $t$ in the lower order terms. We also assume that each $D_j$ has uniformly $C^{m-1}$ coefficients.

Let $\alpha$ be a multiindex of length $k$ and $D^\alpha = D_{\alpha_1}\cdots D_{\alpha_k}$.
Let $u_\alpha = D^\alpha N^\ep_{q,t} f$. By the discussion after Lemmas \ref{lem:commutator of tangential/normal derivs}-\ref{lem:commutator of weighted tangential/ unweighted normal derivs}, we know that
\[
\big \|[D^\alpha ,\dbar^*_t] N^\ep_{q,t}f\big\|_t + \big\|[D^\alpha ,\dbar] N^\ep_{q,t}f\big\|_t + \big\| [D^\alpha,\nabla_X] N^\ep_{q,t} f\big\|_t
\leq C \big\|N^\ep_{q,t}f\big\|_{t,|\alpha|,\Om}+C_t\big\|N^\ep_{q,t}f\big\|_{t,|\alpha|-1,\Om}.
\]
This means
\begin{multline*}
Q_{t,\ep}(u_\alpha,u_\alpha)\leq \big|(D^\alpha \dbar N^\ep_{q,t} f,\dbar u_\alpha)_t\big|+\big|(D^\alpha \dbar^*_t N^\ep_{q,t} f,\dbar^*_t u_\alpha)_t\big|
+\ep\big|(D^\alpha \nabla_X  N^\ep_{q,t} f,\nabla_X u_\alpha)_t\big|\\
+\left(C\|N^\ep_{q,t}f\|_{t,|\alpha|,\Om}+C_t \|N^\ep_{q,t}f\|_{t,|\alpha|-1,\Om}\right)\sqrt{Q_{t,\ep}(u_\alpha,u_\alpha)}.
\end{multline*}
We would like to integrate by parts, but Proposition \ref{prop:elliptic regularity for N^q_eps} gives us only that
$\dbar u_\alpha$, $\dbarst u_\alpha$, $\nabla_X u_\alpha$ are in $H^1_{(0,q)}(\Omega)$. Therefore, we use Lemma \ref{lem:density} to approximate $u_\alpha$ by $u_\alpha^\ell \in H^{|\alpha|+1}_{(0,q)}(\Omega)\cap\Dom(\dbarst)$
and  integrate by parts and commute again to obtain
\begin{align*}
Q_{t,\ep}(u_\alpha,u_\alpha)
&\leq  \lim_{\ell\to\infty}\bigg[ \big|\big(D^\alpha \dbar N^\ep_{q,t} f,\dbar u_\alpha^\ell\big)_t\big|+\big|\big(D^\alpha \dbar^*_t N^\ep_{q,t} f,\dbar^*_t u_\alpha^\ell\big)_t\big|
+\ep\big|\big(D^\alpha \nabla_X  N^\ep_{q,t} f,\nabla_X u_\alpha^\ell\big)_t\big|\bigg]\\
&\hspace{1in}+\left(C\|N^\ep_{q,t}f\|_{t,|\alpha|,\Om}+C_t \|N^\ep_{q,t}f\|_{t,|\alpha|-1,\Om}\right)\sqrt{Q_{t,\ep}(u,u)} \\
&\leq \limsup_{\ell\to\infty} \bigg[\big|Q_{t,\ep}(N^\ep_{q,t} f,(D^\alpha)^* u_\alpha^\ell)\big|+\big|(\dbar N^\ep_{q,t} f,[(D^\alpha)^*,\dbar] u_\alpha^\ell)_t\big| \\
&\hspace{.5in}+\big|(\dbar^*_t N^\ep_{q,t} f,[(D^\alpha)^*,\dbar^*_t]u_\alpha^\ell)_t\big| +\ep\big|(\nabla_X  N^\ep_{q,t} f,[(D^\alpha)^*,\nabla_X] u_\alpha^\ell)_t\big|\bigg]\\
&\hspace{1in}+\left(C\|N^\ep_{q,t}f\|_{t,|\alpha|,\Om}+C_t \|N^\ep_{q,t}f\|_{t,|\alpha|-1,\Om}\right)\sqrt{Q_{t,\ep}(u_\alpha,u_\alpha)}.
\end{align*}
By definition,
\[
\limsup_{\ell\to\infty} Q_{t,\ep}(N^\ep_{q,t} f,(D^\alpha)^* u_\alpha^\ell)_t=\lim_{\ell\to\infty} (f,(D^\alpha)^* u_\alpha^\ell)_t=(D^\alpha  f,u_\alpha)_t.
\]

We have left to handle the commutator terms. They are estimated in the same fashion, and we show the estimate of $|(\dbar N^\ep_{q,t} f,[(D^\alpha)^*,\dbar] u_\alpha^\ell)_t|$. Recall that if $T$ is a tangential operator with uniformly $C^{m-1}$ coefficients, then
$T^* = -T^t + c_T$ where $c_T$ is a function that has uniformly $C^{m-2}$ coefficients. If $\alpha'$ is defined so that $D_{\alpha_j}^* = D_{\alpha'_{k-j+1}} + c_{\alpha_j}$, then
\[
(D^\alpha)^* = D_{\alpha_k}^* \cdots D_{\alpha_1}^* = (D_{\alpha'_1} + c_{\alpha_k})\cdots (D_{\alpha'_k} + c_{\alpha_1}) = D^{\alpha'} + \sum_{\beta'\subsetneq\alpha'} D^{\beta'} c_{\beta'}.
\]
Consequently, $[(D^\alpha)^*,\dbar]$ should be a form that we can control. In particular if\\ $h= \sum_{J\in\I_q} h_J\, d\z^J$, then
\[
[(D^\alpha)^*,\dbar] h = \sum_{J\in\I_q}\sum_{j=1}^n \Big[(D^\alpha)^*,\frac{\p}{\p\z_j}\Big] h_J\, d\z^j\wedge d\z^J,
\]
and for appropriate first order operators $X_{\beta',j}$ and functions $c_{\alpha',j}$
\begin{align*}
\Big[(D^\alpha)^*,\frac{\p}{\p\z_j}\Big]  & = \Big[D^{\alpha'} + \sum_{\beta'\subsetneq\alpha'} D^{\beta'} \tilde c_{\beta'}, \frac{\p}{\p\z_j}\Big] \\
&= \sum_{|\beta| \leq k-1} D^{\beta} X_{\beta,j} + c_{\alpha',j}.
\end{align*}
By an abuse of notation, we denote
\[
[(D^\alpha)^*,\dbar] h = \sum_{J\in\I_q}\sum_{j=1}^n \sum_{|\beta| \leq k-1}\left( D^{\beta} X_{\beta,j}h_J + c_{\alpha',j}h_J\right) d\z^j\wedge d\z^J
:= \sum_{|\beta|\leq k-1} D^\beta X_\beta h + c_{\alpha'} h.
\]
We now estimate
\begin{align*}
\lim_{\ell\to \infty} \big|(\dbar N^\ep_{q,t} f,[(D^\alpha)^*,\dbar] u_\alpha^\ell)_t\big|
&\leq \lim_{\ell\to \infty} \sum_{|\beta|\leq k-1} \big|(\dbar N^\ep_{q,t} f,(D^{\beta} X_{\beta} + c_{\alpha'}) u_\alpha^\ell)_t\big| \\
&= \sum_{|\beta|\leq k-1} \big|(\dbar N^\ep_{q,t} f,(D^{\beta} X_{\beta} + c_{\alpha'}) D^\alpha N^\ep_{q,t} f)_t\big| \\
&\leq \sum_{|\beta|\leq k-1} \big|\big((D^\beta)^* \dbar N^\ep_{q,t} f,X_\beta  D^\alpha N^\ep_{q,t} f \big)_t\big| +\|N^\ep_{q,t} f \|_{t,1,\Om} \|N^\ep_{q,t} f \|_{t,k,\Om}
\end{align*}
If $k=1$, we are done.  If $k>1$, we cannot pass the $X_\beta$ term to the other side, but we can commute $X_\beta$ with $D_{\alpha_1}$ and integrate by parts to bring the $D_{\alpha_1}$ term across the inner product. Specifically,
\begin{align*}
&\big|\big((D^\beta)^* \dbar N^\ep_{q,t} f,X_\beta  D^\alpha N^\ep_{q,t} f \big)_t\big| \\
&= \big|\big( D_{\alpha_1}^* (D^\beta)^* \dbar N^\ep_{q,t} f,X_\beta  D_{\alpha_2}\cdots D_{\alpha_k} N^\ep_{q,t} f \big)_t\big| + \big|\big( (D^\beta)^* \dbar N^\ep_{q,t} f, [X_\beta,D_{\alpha_1}]  D_{\alpha_2}\cdots D_{\alpha_k} N^\ep_{q,t} f \big)_t\big| \\
&\leq \big| \sum_{|\gamma| \leq |\beta|+1} \big(c_\gamma D^\gamma \dbar N_{q,t}^\ep f, X_\beta  D_{\alpha_2}\cdots D_{\alpha_k} N^\ep_{q,t} f \big)_t\big| + C \|N^\ep_{q,t} f\|_{t,k,\Om}^2 \\
&\leq C\Big(\sum_{|\gamma| \leq |\beta|+1} \sqrt{Q_{t,\ep}(D^\gamma N_{q,t}^\ep f, D^\gamma N_{q,t}^\ep f)}\|N^\ep_{q,t} f\|_{t,k,\Om} + \|N^\ep_{q,t} f\|_{t,k,\Om}^2\Big).
\end{align*}
Putting our estimates together, we have proven the following lemma.

\begin{lem}\label{lem:elliptic_regularization_key_step}
  Let $\Omega\subset \C^n$ satisfy the hypotheses of Theorem \ref{thm:closed range for dbar}.
  There exist constants $C>0$ independent of $t$ and $C_t>0$ depending on $t$ such that for any $f\in C^\infty_{(0,q)}(\Omega)$
  and operator $D^\alpha$ of order $k$ that is a composition of operators that are tangential near $\bd\Omega$ and each have coefficients that are at least uniformly $C^{m-1}$, we have
  \begin{align}
    &Q_{t,\ep}\big(D^\alpha  N^\ep_{q,t} f, D^\alpha N^\ep_{q,t} f\big) \leq C\big|(D^\alpha f, D^\alpha  N^\ep_{q,t} f\big)_t\big| \nn\\
    &+C\Big( \|N^\ep_{q,t}f\|^2_{t,|\alpha|,\Om} + \sum_{|\gamma|\leq k}\sqrt{Q_{t,\ep}(D^\gamma N_{q,t}^\ep f, D^\gamma N_{q,t}^\ep f)}\|N^\ep_{q,t} f\|_{t,k,\Om}\Big)   \label{eq:elliptic_regularization_key_step}
    +C_t\|N^\ep_{q,t}f\|_{t,|\alpha|-1,\Om}^2.
  \end{align}
\end{lem}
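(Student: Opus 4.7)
The plan is to test the defining variational identity for $N^\ep_{q,t}$ against $(D^\alpha)^* u_\alpha$ with $u_\alpha = D^\alpha N^\ep_{q,t} f$, commute $D^\alpha$ inside $Q_{t,\ep}$, and then use the commutator calculus from Lemmas~\ref{lem:commutator of tangential/normal derivs}--\ref{lem:commutator of weighted tangential/ unweighted normal derivs} to absorb the error. First I would replace $u_\alpha$ by the smoothed sequence $u_\alpha^\ell$ produced by Lemma~\ref{lem:density}, so that I may legitimately take $D_{\alpha_j}^*$ across the inner product one factor at a time. Because $N^\ep_{q,t} f \in \X$ satisfies $Q_{t,\ep}(N^\ep_{q,t} f, v) = (f,v)_t$ for every $v \in \X$, applying this to $v = (D^\alpha)^* u_\alpha^\ell$ (after checking it remains in $\Dom(\dbarst)$ via the tangentiality of the factors of $D^\alpha$) and passing to the limit yields $\lim_\ell Q_{t,\ep}(N^\ep_{q,t} f,(D^\alpha)^* u_\alpha^\ell) = (D^\alpha f, D^\alpha N^\ep_{q,t} f)_t$, which is the main term on the right of \eqref{eq:elliptic_regularization_key_step}.

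Second, I would expand $Q_{t,\ep}(D^\alpha N^\ep_{q,t} f, D^\alpha N^\ep_{q,t} f)$ into its $\dbar$, $\dbars_t$, and $\ep\,\nabla_X$ pieces and, for each piece, commute $D^\alpha$ past the differential operator. The commutator lemmas show that each of $[D^\alpha,\dbar]$, $[D^\alpha,\dbars_t]$, $[D^\alpha,\nabla_X]$ is a sum of terms $D^\beta$ times a first-order operator (weighted or not) with uniformly bounded coefficients, plus a function term carrying a factor of $t$. The non-$t$ terms feed directly into the bound $C\|N^\ep_{q,t}f\|^2_{t,|\alpha|,\Om}$ after Cauchy--Schwarz, while the terms carrying $t$ are absorbed in $C_t\|N^\ep_{q,t}f\|^2_{t,|\alpha|-1,\Om}$ since they contain at most $|\alpha|-1$ derivatives.

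Third, I would handle the commutator terms obtained after integration by parts, namely expressions like $(\dbar N^\ep_{q,t}f,[(D^\alpha)^*,\dbar] u_\alpha^\ell)_t$ (and the parallel ones with $\dbars_t$ and $\nabla_X$). Writing $(D^\alpha)^* = D^{\alpha'} + \sum_{\beta'\subsetneq\alpha'} D^{\beta'}\tilde c_{\beta'}$, the commutator with $\dbar$ produces a sum $\sum_{|\beta|\leq k-1} D^\beta X_\beta + c_{\alpha'}$, with each $X_\beta$ first-order and the coefficients of controlled regularity. The main obstacle is that one cannot simply move all of $D^\beta$ back to $\dbar N^\ep_{q,t} f$ because it would leave $X_\beta D^\alpha N^\ep_{q,t} f$ on the other side, which has order $k+1$. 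To get around this, I would, exactly as in the scratch computation before the lemma, move only $|\beta|+1 \leq k$ derivatives to the $\dbar$-side, keeping the remaining $D_{\alpha_2}\cdots D_{\alpha_k}$ on the $N^\ep_{q,t} f$-side and paying a single commutator $[X_\beta, D_{\alpha_1}]$ which is order one. This yields $\sqrt{Q_{t,\ep}(D^\gamma N^\ep_{q,t}f, D^\gamma N^\ep_{q,t}f)}$ factors for $|\gamma|\leq k$ paired against $\|N^\ep_{q,t}f\|_{t,k,\Om}$, matching the middle summand in \eqref{eq:elliptic_regularization_key_step}.

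Finally, gathering the three groups of contributions (the main variational term, the first-order commutator bounds from the $Q_{t,\ep}$ expansion, and the mixed commutator--integration-by-parts terms), and throwing all lower-order $t$-dependent junk into the $C_t\|N^\ep_{q,t}f\|^2_{t,|\alpha|-1,\Om}$ reservoir, yields \eqref{eq:elliptic_regularization_key_step}. The hardest bookkeeping is keeping track of which coefficients cost a factor of $t$ (and must therefore be routed into $C_t$) versus which are uniformly bounded (and can be kept in the $t$-independent constant $C$); the three commutator lemmas are exactly designed to separate these two classes, so the proof reduces to a careful but essentially mechanical accounting.
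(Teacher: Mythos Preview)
Your proposal is correct and follows essentially the same approach as the paper: the argument given in the paper is precisely the computation displayed in the paragraphs immediately preceding the lemma, and your outline reproduces it step for step (density approximation via Lemma~\ref{lem:density}, the variational identity $Q_{t,\ep}(N^\ep_{q,t}f,v)=(f,v)_t$ applied to $v=(D^\alpha)^* u_\alpha^\ell$, the commutator expansion $(D^\alpha)^*=D^{\alpha'}+\sum D^{\beta'}c_{\beta'}$, and the $D_{\alpha_1}$-shift trick to avoid an order $k+1$ term). The only cosmetic discrepancy is in your second paragraph: the first-round commutator contribution is really $\big(C\|N^\ep_{q,t}f\|_{t,|\alpha|,\Om}+C_t\|N^\ep_{q,t}f\|_{t,|\alpha|-1,\Om}\big)\sqrt{Q_{t,\ep}(u_\alpha,u_\alpha)}$ rather than the squared norm directly, but this is already absorbed by the $\sum_{|\gamma|\leq k}\sqrt{Q_{t,\ep}(\cdot)}\,\|N^\ep_{q,t}f\|_{t,k,\Om}$ term in \eqref{eq:elliptic_regularization_key_step}, so no change is needed.
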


Summing over all $|\alpha|\leq k$, we use Lemma \ref{lem:elliptic_regularization_key_step} and obtain
\begin{multline*}
    \sum_{|\alpha|\leq k} Q_{t,\ep}\big(D^\alpha  N^\ep_{q,t} f, D^\alpha N^\ep_{q,t} f\big) \leq C\sum_{|\alpha|\leq k} \big|(D^\alpha f, D^\alpha  N^\ep_{q,t} f\big)_t\big| \\
    +C\Big( \|N^\ep_{q,t}f\|^2_{t,k,\Om} + \sum_{|\alpha|\leq k}\sqrt{Q_{t,\ep}(D^\alpha N_{q,t}^\ep f, D^\alpha N_{q,t}^\ep f)}\|N^\ep_{q,t} f\|_{t,k,\Om}\Big)  +C_t\|N^\ep_{q,t}f\|_{t,k-1,\Om}^2.
\end{multline*}
By using a small constant/large constant argument and absorbing terms, we see that
\begin{equation}\label{eqn:good est on Q}
\sum_{|\alpha|\leq k} Q_{t,\ep}\big(D^\alpha  N^\ep_{q,t} f, D^\alpha N^\ep_{q,t} f\big)
\leq C \|f\|_{t,k,\Om}^2 + C \|N^\ep_{q,t} f\|_{t,k,\Om}^2 + C_t \|N^\ep_{q,t} f\|_{t,k-1,\Om}^2.
\end{equation}

\begin{proof}[Proof of Theorem \ref{thm:closed range for dbar}]

The $s=0$ case for parts (i)-(vii) are the content of the \cite[Theorem 2.5]{HaRa17}.
Additionally, as a consequence of the interpolation theory developed for weighted Sobolev spaces on unbounded domains \cite{HaRa14}, it suffices to prove Theorem \ref{thm:closed range for dbar} when $s\in\N$.
We follow the argument from \cite[Section 6.4]{HaRa11}.

Proof of (v.a): Plugging \eqref{eqn:good est on Q} into Proposition \ref{prop:basic estimate}, we estimate that for any $\ep>0$
\begin{equation}\label{eqn:tangential bound}
\sum_{|\alpha|\leq k}\|D^\alpha  N^\ep_{q,t} f\|^2_{t} \leq \ep(\|f\|_{t,k,\Om}^2 + \|N^\ep_{q,t}f \|^2_{t,k,\Om} ) +C_t\|N^\ep_{q,t}f\|_{t,k-1,\Om}^2.
\end{equation}
for all $t$ sufficiently large.

As a consequence of Proposition \ref{prop:controlling normal derivatives}, bounding the tangential derivatives suffices to bound the normal derivatives.
Thus, \eqref{eqn:tangential bound} strengthens to
\[
\|N^\ep_{q,t}f\|^2_{t,k,\Om} \leq \ep(\|f\|_{t,k,\Om}^2 + \|N^\ep_{q,t}f\|^2_{t,k,\Om} ) +C_t\|N^\ep_{q,t}f\|_{t,k-1,\Om}^2.
\]
By choosing $\ep$ sufficiently small (which forces $t$ to be large), we can absorb terms and establish
\begin{equation}\label{eqn:continuity for N^delta}
\|N^\ep_{q,t}f\|^2_{t,k,\Om} \leq C\|f\|_{t,k,\Om}^2  +C_t\|N^\ep_{q,t}f\|_{t,k-1,\Om}^2
\end{equation}
where $C$ and $C_t$ are independent of $\ep$.

We now let $\ep\to 0$. We have shown that if $f\in W^{k}_{0,q}(\Omega)$, then $\{N_{q,t}^\ep f: 0 < \ep < 1\}$ is bounded in $W^{k}_{0,q}(\Omega)$. This means
there exists a sequence $\ep_\ell\to 0$ and $\tilde u \in W^{k}_{0,q}(\Omega)$ so that $N_{q,t}^{\ep_\ell} f \to \tilde u$ weakly in $W^{k}_{0,q}(\Omega)$. Consequently, if
$v\in (C^\infty_c)_{0,q}(\Omega)$, then
\[
\lim_{\ell\to\infty} Q_{t,\ep_\ell}(N_{q,t}^{\ep_\ell} f, v) = Q_t(\tilde u,v).
\]
Also,
\[
Q_{t,\ep_\ell}(N_{q,t}^{\ep_\ell} f, v) = (f,v) = Q_t(N_{q,t}f,v),
\]
so $N_{q,t}f = \tilde u$ and \eqref{eqn:continuity for N^delta} holds with $\ep=0$. Thus, $N_{q,t}$ is a continuous operator on $W^{k}_{0,q}(\Omega)$.

Proof of (v.b) and (v.d):
The continuity of $\dbar N_{q,t}$ and $\dbarst N_{q,t}$ in $W^{k}_{0,q}(\Omega)$ follows by choosing $t$ larger (possibly). Alternatively, we could
modify the argument of \cite[Section 6.5]{HaRa11}. As with $N_{q,t}$, we only need
to check that tangential derivatives are bounded.

The remaining items to show are (v.c) and (v.e),
the continuity of $N_{q,t}\dbar : W^{k}_{0,q-1}\cap\dom{\dbar} \to W^{k}_{0,q}$ and $N_{q,t}\dbar^*_t : W^{k}_{0,q+1}\cap\dom{\dbar^*_t} \to W^{k}_{0,q}$.
Although
the case $k=0$ was done at the end of Section \ref{sec:basic estimate, proof of L^2 case} (see also \cite[Theorem 4.3]{HaRa15}),
a comment is in order. The operator $N_{q,t}\dbars_t$ appears to require that the form be an element of $\dbars_t$. However, it follows from \cite[Lemma 3.2]{HaRa17} and the fact that
$(C^\infty_c)_{0,q+1}(\Om')$ is dense in $\Dom(\dbars_t)$ on bounded domains $\Om'$ \cite[Lemma 4.3.2]{ChSh01} that $(C^\infty_c)_{0,q+1}(\Om)$ forms are dense in $\Dom(\dbars_t)$ in $L^2_{0,q+1}(\Om,\vp)$. Consequently,
the fact that  $N_{q,t}\dbars_t = (\dbar N_{q,t})^*$ is a bounded operator on $L^2_{0,q+1}(\Om,\vp)$ means that we can define $N_{q,t}\dbars_t$ on $L^2_{0,q+1}(\Om,\vp)$ by density. Thus,
$N_{q,t}\dbars_t$ is defined for any form in $L^2_{0,q+1}(\Om)$, not just forms in $\Dom(\dbars_t)$. A similar argument also justifies writing $N_{q,t}\dbar$ applied to an arbitrary form in $L^2_{0,q-1}(\Om)$.
Since $N_{q,t}\dbar^*_t$ and $N_{q,t}\dbar$ both produce forms in $\Dom(\dbarst)$, it suffices to estimate the $W^{k}$ norm using only the special tangential derivatives $D^\alpha$. To that end,
observe that
\[
\dbar D^\alpha  N_{q,t} f = [\dbar, D^\alpha ] N_{q,t} f  +  D^\alpha  \dbar N_{q,t} f
\]
and
\[
\dbarst D^\alpha N_{q,t} f = [\dbarst, D^\alpha ] N_{q,t} f  +  D^\alpha  \dbarst N_{q,t} f.
\]
By Proposition \ref{prop:basic estimate}, given $\epsilon>0$ there exists $t>0$ so that
\[
\|N_{q,t} f \|_{t,k,\Om}^2 \leq C \sum_{|\alpha|\leq k} \|D^\alpha N_{q,t} f\|_t^2
\leq \ep\sum_{|\alpha|\leq k}\Big( \|\dbar D^\alpha N_{q,t}f\|_t^2 + \|\dbarst D^\alpha N_{q,t}f\|_t^2\Big).
\]
Since $f$ has smooth coefficients, by choosing $t$ larger (if necessary), we can use a small constant/large constant argument and estimate
\[
\|N_{q,t} f \|_{t,k,\Om}^2 \leq \ep \sum_{|\alpha|\leq k}\Big( \|D^\alpha  \dbar N_{q,t}f\|_t^2 + \|D^\alpha  \dbarst N_{q,t}f\|_t^2\Big) + C_t \|N_{q,t} f\|_{t,k-1,\Om}^2.
\]

Next, suppose that $f = \dbar^*_t g$ for a $(0,q+1)$-form in $g\in\dom{\dbar^*_t}$ with smooth coefficients. Then by induction and \cite[(22)]{HaRa11},
\[
\|N_{q,t}\dbar^*_t g \|_{t,k,\Om}^2
\leq \ep \sum_{|\alpha|\leq k} \| D^\alpha \dbar N_{q,t} \dbar^*_t g\|_t^2 + C_t \|g\|_{t,k-1,\Om}^2.
\]

We now handle the $\| D^\alpha \dbar N_{q,t} \dbar^*_t g\|_t^2$ term. Since $D^\alpha \dbar N_{q,t} \dbar^*_t g\in\Dom(\dbarst)$, it follows that
\begin{align*}
&\| D^\alpha \dbar N_{q,t} \dbar^*_t g\|_t^2
= \big(D^\alpha  N_{q,t} \dbar^*_t g , \dbarst D^\alpha \dbar N_{q,t} \dbar^*_t g \big)_t + \big([D^\alpha,\dbar] N_{q,t} \dbar^*_t g ,  D^\alpha \dbar N_{q,t} \dbar^*_t g \big)_t \\
&\leq \big|\big(D^\alpha  N_{q,t} \dbar^*_t g , D^\alpha \dbarst \dbar N_{q,t} \dbar^*_t g \big)_t\big| + O\Big(\|N_{q,t}\dbar^*_t g\|_{t,|\alpha|,\Omega}\|\dbar N_{q,t} \dbar^*_t g\|_{t,|\alpha|,\Omega}\Big) \\
&\hspace{3in}+ O_t\Big(\|N_{q,t}\dbar^*_t g\|_{t,|\alpha|-1,\Omega}\|\dbar N_{q,t} \dbar^*_t g\|_{t,|\alpha|,\Omega}\Big)\\
&\leq \big|\big(D^\alpha  N_{q,t} \dbar^*_t g , D^\alpha  \dbar^*_t g \big)_t\big| + l.c.\|N_{q,t}\dbar^*_t g\|_{t,|\alpha|,\Omega} + s.c.\|\dbar N_{q,t} \dbar^*_t g\|_{t,|\alpha|,\Omega} + C_t\|N_{q,t}\dbar^*_t g\|_{t,|\alpha|-1,\Omega} .
\end{align*}
Thus,
\[
\sum_{|\alpha|\leq k}\| D^\alpha \dbar N_{q,t} \dbar^*_t g\|_t^2 \leq 2\sum_{|\alpha|\leq k}\big|\big(D^\alpha  N_{q,t} \dbar^*_t g , D^\alpha  \dbar^*_t g \big)_t\big| + C\|N_{q,t}\dbar^*_t g\|_{t,|\alpha|,\Omega}^2 + C_t\|N_{q,t}\dbar^*_t g\|_{t,|\alpha|-1,\Omega}.
\]
Next,
\begin{multline*}
\sum_{|\alpha|\leq k}\big(D^\alpha  N_{q,t} \dbar^*_t g , D^\alpha  \dbar^*_t g \big)_t
= \sum_{|\alpha|\leq k}\big(D^\alpha  \dbar N_{q,t} \dbar^*_t g , D^\alpha  g \big)_t \\
+ O\Big(\|\dbar N_{q,t} \dbar^*_t g\|_{t,k,\Omega}\|g\|_{t,k,\Omega}\Big) +  O_t\Big(\|\dbar N_{q,t} \dbar^*_t g\|_{t,k,\Omega}\|g\|_{t,k-1,\Omega}\Big).
\end{multline*}
Thus, by absorbing terms after a small constant/large constant argument, we have
\[
\sum_{|\alpha|\leq k}\| D^\alpha \dbar N_{q,t} \dbar^*_t g\|_t^2 \leq  C\|g\|_{t,k,\Omega}^2  + C_t\|g\|_{t,k-1,\Omega}^2 + C\|N_{q,t}\dbar^*_t g\|_{t,k,\Omega}^2.
\]
Finally, by choosing $\ep$ sufficiently small to absorb the $\|N_{q,t}\dbar^*_t g\|_{t,|\alpha|,\Omega}^2$ terms, we have proven
\[
\|N_{q,t}\dbar^*_t g \|_{t,k,\Om}^2
\leq \ep \| g\|_{t,k,\Om}^2 + C_t \|g\|_{t,k-1,\Om}^2.
\]

The argument to prove
\[
\|N_{q,t}\dbar g \|_{t,k,\Om}^2
\leq \ep \| g\|_{t,k,\Om}^2 + C_t \|g\|_{t,k-1,\Om}^2
\]
is similar.

Proof of (v.g): It suffices to prove the $s=0$ case for smooth, compactly supported $u \in L^2_{0,q}(\Om,e^{-t|z|^2})$.
Then $\dbars_t\dbar N_{q,t}u$ is well defined, by (iv). Since $\Ran \dbar \perp \Ran \dbars_t$, it follows that
\[
\| \dbars_t \dbar N_{q,t} u \|_t^2 =  \big( \Box_{q,t} N_{q,t} u,  \dbars_t \dbar N_{q,t} u\big)_t
=  \big( u,  \dbars_t \dbar N_{q,t} u\big)_t \leq \|u\|_t \| \dbars_t \dbar N_{q,t} u \|_t,
\]
from which the $s=0$ case follows. The argument for $\dbar \dbars_t N_{q,t} u$ is identical.
For $s>0$, we use proof by induction and will show that for any multiindex $\beta$ of length $s$,
\[
\|X^\beta \dbar\dbars_t N_{q,t} u \|_t^2 + \|X^\beta \dbars_t\dbars N_{q,t} u \|_t^2 \leq C_t \|u\|_{t,s,\Om}^2.
\]
The estimate will follow from the argument of the $s=0$ case couple with the following estimate.
\begin{align*}
&\big(X^\beta \dbars_t \dbar N_{q,t}u, X^\beta \dbar\dbars_t N_{q,t} u\big)_t\\
&= \big([X^\beta,\dbars_t] \dbar N_{q,t}u, X^\beta \dbar\dbars_t N_{q,t} u\big)_t + \big(\dbars_t X^\beta \dbar N_{q,t} u, [X^\beta,\dbar]\dbars_t N_{q,t} u\big)_t \nn \\
&= \big([X^\beta,\dbars_t] \dbar N_{q,t}u, X^\beta \dbar\dbars_t N_{q,t} u\big)_t + \big([\dbars_t, X^\beta] \dbar N_{q,t} u, [X^\beta,\dbar]\dbars_t N_{q,t} u\big)_t\\
&\hspace{1in}+\big(X^\beta \dbars_t \dbar N_{q,t} u, [X^\beta,\dbar]\dbars_t N_{q,t} u\big)_t \nn \\
&\leq \|\dbar N_{q,t}u\|_{t,s,\Om} \|X^\beta \dbar\dbars_t N_{q,t} u\|_t +  \|X^\beta \dbars_t\dbar N_{q,t} u\|_t \|\dbars_t N_{q,t}u\|_{t,s,\Om}\\
&\hspace{1in}+ \|\dbar N_{q,t}u\|_{t,s,\Om}\big( \|\dbars_t N_{q,t}u\|_{t,s,\Om} + C_t \|\dbars_t N_{q,t}u\|_{t,s-1,\Om}\big)
\end{align*}
Indeed,
\begin{align*}
&\|X^\beta \dbar\dbars_t N_{q,t} u \|_t^2 + \|X^\beta \dbars_t\dbars N_{q,t} u \|_t^2
=\big( X^\beta u, X^\beta \dbar\dbars_t N_{q,t} u \big)_t + \big( X^\beta u, X^\beta \dbars_t\dbar N_{q,t} u \big)_t \\
&- \big(X^\beta \dbars_t \dbar N_{q,t} u, X^\beta \dbar\dbars_t N_{q,t} u\big)_t - \big(X^\beta  \dbar\dbars_t N_{q,t} u, X^\beta \dbars_t\dbar N_{q,t} u\big)_t \\
&\leq \|X^\beta u \|_t\big(\|X^\beta \dbar\dbars_t N_{q,t} u \|_t + \|X^\beta \dbars_t\dbars N_{q,t} u \|_t\big)  \\
&+  \|\dbar N_{q,t}u\|_{t,s,\Om}\big( \|X^\beta \dbar\dbars_t N_{q,t} u\|_t + \|\dbars_t N_{q,t}u\|_{t,s,\Om} + C_t \|\dbars_t N_{q,t}u\|_{t,s-1,\Om}\big) \\
&+ \|\dbars_t N_{q,t}u\|_{t,s,\Om}\big(\|X^\beta \dbars_t\dbar N_{q,t} u\|_t +  \|\dbar N_{q,t}u\|_{t,s,\Om} \big).
\end{align*}
From this calculation, it follows that
\[
\|X^\beta \dbar\dbars_t N_{q,t} u \|_t^2 + \|X^\beta \dbars_t\dbars N_{q,t} u \|_t^2 \leq C \|u \|_{t,s,\Om}^2 + C_{s,t}\|u \|_{t,s-1,\Om}^2
\]
and consequently half of (v.g) is proven.

Proof of the remaining case of (v.g):
By Proposition \ref{prop:controlling normal derivatives}, we claim that it suffices to consider derivatives $X^\beta$ that are tangential near $\bd\Om$. Indeed,
\[
\|\dbars_t N_{q,t} \dbar u\|_{t,k,\Om}^2 \leq C_k\big( \big\|(\nabla^t_T)^k \dbars_tN_{q,t} \dbar u\big\|_t^2 + \|\dbar\dbars_{q,t}N_{q,t}\dbar u\|_{t,k-1,\Om}^2\big) + C_{t,k} \|\dbars_t N_{q,t} \dbar u\|_{t,k-1,\Om}^2.
\]
However,
\[
\|\dbar\dbars_{q,t}N_{q,t}\dbar u\|_{t,k-1,\Om}^2 = \|\dbar u\|_{t,k-1,\Om}^2 \leq \|u\|_{t,k,\Om}^2.
\]
Therefore, let $T^\beta$ be an order $k$ operator that is tangential near $\bd\Om$. The reason that tangential operators are important here is that if $f\in\Dom(\dbars_t)$, then so is $T^\beta f$.
\begin{align*}
&\|T^\beta \dbars_t N_{q,t} \dbar u\|_{t,k,\Om}^2\\
&= \big( T^\beta \dbar u, T^\beta N_{q,t}\dbar u\big)_t + \big(T^\beta\dbars_t N_{q,t}\dbar u, [T^\beta,\dbars_t]N_{q,t}\dbar u\big)_t + \big([\dbar,T^\beta]\dbars_t N_{q,t}\dbar u, T^\beta N_{q,t}\dbar u\big)_t \\
&=  \big( T^\beta u, T^\beta \dbars_t N_{q,t}\dbar u\big)_t + \big(T^\beta\dbars_t N_{q,t}\dbar u, [T^\beta,\dbars_t]N_{q,t}\dbar u\big)_t + \big([\dbar,T^\beta]\dbars_t N_{q,t}\dbar u, T^\beta N_{q,t}\dbar u\big)_t \\
&+ \big([T^\beta,\dbar]u,T^\beta N_{q,t}\dbar u\big)_t + \big(T^\beta u, [\dbars_t,T^\beta]N_{q,t}\dbar u\big)_t.
\end{align*}
Part (a) now follows from the Cauchy-Schwarz inequality, followed by a small constant/large constant argument and term absorbtion.

Proof if (i)-(iv): This group of results follows from the continuity of the $\dbar$-Neumann operator and the canonical solutions operators.

Proof of (vii): We have established the estimates for Kohn's weighted theory, so solvability in $C^\infty$ proceeds using standard arguments. See, for example, \cite[Section 6.8]{HaRa11}.
\end{proof}

\begin{rem}Our argument can also be used to established the following estimate: for $k\geq 1$, there
exists $T_k>0$ so that if $t \geq T_k$, then
there exist constants $C_k, C_{k,t}>0$ where $C_s$ does not depend on $t$ and so that for any $u\in H^k_{0,q}(\Om,e^{-t|z|^2},X) \cap \Dom(\dbars_t)$ with $\dbar u \in H^k_{0,q+1}(\Om,e^{-t|z|^2},X)$ and
$\dbars_t u \in H^k_{0,q-1}(\Om,e^{-t|z|^2},X)$, the inequality
\[
\|u\|_{t,k,\Om}^2 \leq C_k\big( \|\dbar u\|_{t,k,\Om}^2 + \|\dbars_t u\|_{t,k,\Om}^2\big) + C_{k,t}\|u\|_{t,k-1,\Om}^2.
\]
holds.
The key to show this estimate is Proposition \ref{prop:controlling normal derivatives} which allows us to estimate the norm in $H^k_{0,q}(\Om,e^{-t|z|^2},X)$ for forms in $\Dom(\dbar)\cap\Dom(\dbars_t)$ with derivatives that are
tangential near $\bd\Om$.
Consequently, the argument is no different than the $\dbarb$ case which we established in \cite[\S6.3]{HaRa11}. The result is a matter of careful integration by parts. It is not immediate that this is a closed range estimate for
$\dbar$ (or $\dbars_t$) since $\dbars_t$ is the $L^2$-adjoint of $\dbar$, not the $H^k$-adjoint.
\end{rem}

\bibliographystyle{alpha}
\bibliography{mybib1_27_17}

\end{document}